\documentclass{article}



\usepackage[nonatbib,preprint]{neurips_2021}

\usepackage{optquant}

\begin{document}

%

%

\title{Non-asymptotic convergence bounds for\\
  Wasserstein approximation using point clouds}

 \author{Quentin M\'erigot \\ 
   Université Paris-Saclay, CNRS,\\ Laboratoire de mathématiques d’Orsay, \\
   91405, Orsay, France\\
   Institut Universitaire de France 
   \And
   Filippo Santambrogio \\ Univ Lyon,\\ Université Claude Bernard Lyon 1, CNRS \\
   UMR 5208, Institut Camille Jordan\\  F-69622 Villeurbanne\\
      Institut Universitaire de France 
  \And Cl\'ement Sarrazin \\ 
Université Paris-Saclay, CNRS,\\ Laboratoire de mathématiques d’Orsay\\ 91405, Orsay, France }

\maketitle

\begin{abstract}

        
Several issues in machine learning and inverse problems require to
generate discrete data, as if sampled from a model probability
distribution. A common way to do so relies on the construction of a
uniform probability distribution over a set of $N$ points which
minimizes the Wasserstein distance to the model distribution. This
minimization problem, where the unknowns are the positions of the
atoms, is non-convex. Yet, in most cases, a suitably adjusted version
of Lloyd's algorithm --- in which Voronoi cells are replaced by Power
cells --- leads to configurations with small Wasserstein error. This
is surprising because, again, of the non-convex nature of the problem,
as well as the existence of spurious critical points. We provide
explicit upper bounds for the convergence speed of this Lloyd-type
algorithm, starting from a cloud of points sufficiently far from each
other. This already works after one step of the iteration procedure,
and similar bounds can be deduced, for the corresponding gradient
descent. These bounds naturally lead to a modified Poliak-Łojasiewicz
inequality for the Wasserstein distance cost, with an error term
depending on the distances between Dirac masses in the discrete
distribution.

\end{abstract}


\section{Introduction}

In recent years, the theory of optimal transport has been the source
of stimulating ideas in machine learning and in inverse
problems. Optimal transport can be used to define distances, called
Wasserstein or earth-mover distances, between probability
distributions over a metric space. These distances allows one to
measure the closeness between a generated distribution and a model
distribution, and they have been used with success as data attachment
terms in inverse problems.  Practically, it has been observed for
several different inverse problems that replacing usual loss functions
with Wasserstein distances tend to increase the basin of convergence
of the methods towards a good solution of the problem, or even to
convexify the landscape of the minimized energy
\cite{feydy2017optimal,engquist2016optimal}. This good behaviour is
not fully understood, but one may attribute it partly to the fact that
the Wasserstein distances  encodes the geometry of the underlying
space. A notable use of Wasserstein distances in machine learning is
in the field of generative adversarial networks, where one seeks
to design a neural network able to produce random examples whose
distribution is close to a prescribed model distribution
\cite{arjovsky2017wasserstein}.

\paragraph{Wasserstein distance and Wasserstein regression}
Given two probability distributions $\rho,\mu$ on $\Rsp^d$, the
Wasserstein distance of exponent $p$ between $\rho$ and $\mu$ is a way
to measure the total cost of moving mass distribution described by
$\rho$ to $\mu$, knowing that moving a unit mass from $x$ to $y$
costs $\nr{x-y}^p$. Formally, it is defined as the value of an optimal
transport problem between $\rho$ and $\mu$:
\begin{equation}\label{eq:wass}
  W_p(\rho,\mu) = \left(\min_{\pi \in \Pi(\rho,\mu)} \int \nr{x -
    y}^p \dd \pi(x,y)\right)^{1/p},
\end{equation}
where we minimize of the set $\Pi(\rho,\mu)$ of \emph{transport plans}
between $\rho$ and $\mu$, i.e. probability distributions over
$\Rsp^d\times \Rsp^d$ with marginals $\rho$ and $\mu$.  Standard
references on the theory of optimal transport include books by Villani
and by Santambrogio
\cite{villani2003topics,villani2008optimal,santambrogio2015optimal},
while the computational and statistical aspects are discussed in a
survey of Cuturi and Peyré \cite{peyre2019computational}.

In this article, we consider regression problems with respect to the
Wasserstein metric, which can be put in the following form
\begin{equation}\label{eq:WGAN}
  \min_{\theta \in\Theta} \Wass_p^p(T_{\theta\#} \mu, \rho),
\end{equation}
where $\mu$ is the \emph{reference distribution}, a probability
measure on $[0,1]^\ell$, $\rho$ is the \emph{model distribution}, a
probability measure on $\Rsp^d$, and where $T_\theta: [0,1]^\ell\to
\Rsp^d$ is a family of maps indexed by a parameter $\theta\in\Theta$.
In the previous formula, we also denoted $T_{\theta}\# \mu$ the image
of the measure $\mu$ under the map $T_\theta$, also called
\emph{pushforward} of $\mu$ under $T_\theta$. This image measure is
defined by $T_{\theta\#} \mu(B) := \mu(T_\theta^{-1}(B))$ for any
measurable set $B$ in the codomain of $T_\theta$.  In this work, we
will concentrate on the quadratic Wasserstein distance $W_2$.  Several
problems related to the design of generative models can be put under
the form \eqref{eq:WGAN}, see for instance
\cite{genevay2018learning,arjovsky2017wasserstein}.  Solving
\eqref{eq:WGAN} numerically is challenging for several reasons, but in
this article we will concentrate on one of them: the non-convexity of
the Wasserstein distance under displacement of the measures.

\paragraph{Non-convexity of the Wasserstein distance under displacements.} 
It is well known that the Wasserstein distance is convex for the
standard (linear) structure of the space of probability measures,
meaning that if $\nu_0$ and $\nu_1$ are two probability measures and
$\nu_t = (1-t)\nu_0 + t\nu_1$, then the map $t\in[0,1]\mapsto
\Wass_p^p(\nu_t,\rho)$ is convex. Using a terminology from physics, we
may say that the Wasserstein distance is convex for the
\emph{Eulerian} structure of the space of probability measures,
e.g. when one interpolates linearly between the densities. However, in
the regression problem \eqref{eq:WGAN}, the perturbations are \emph{Lagrangian} rather
than Eulerian, in the sense that modifications of the parameter
$\theta$ leads to a displacement of the support of the measure
$T_{\theta\#} \mu$. This appears very clearly in particular when $\mu$
is the uniform measure over a set $X = (x_1,\hdots,x_N)$ of $N$ point
in $[0,1]^d$, i.e. $\mu = \delta_X$ with
$$ \delta_X \eqdef \frac{1}{N}\sum_{i=1}^N\delta_{x_i}.$$
In this case $T_{\theta\#} \mu$ is the uniform measure
over the set $T_\theta(X) = (T_\theta(x_1),\hdots,T_\theta(x_n))$,
i.e. $T_{\theta\#} \mu = \delta_{T_\theta(X)}$. In this article, we
will therefore be interested by the function
\begin{equation}
  \label{eq:F} \F_N: Y\in(\R^d)^N \mapsto \frac{1}{2} W^2_2(\rho,\delta_Y).
\end{equation}

This function $F_N$ is \emph{not} convex, and actually exhibits
(semi-)concavity properties. This has been observed first in
\cite{ambrosio2008gradient} (Theorem 7.3.2), and is related to the positive curvature of the 
Wasserstein space. A precise statement in the
context considered here may also be found as Proposition~21
in~\cite{merigot2016minimal}. A practical consequence of the lack of
convexity of $F_N$ is that critical points of $F_N$ are not
necessarily global minimizers. It is actually easy to construct
examples families of critical points $Y_N$ of $F_N$ such that
$F_N(Y_N)$ is bounded from below by a positive constant, while $\lim_{N\to\infty}\min F_N=0$, so that the ratio between $F_N(Y_N)$ and $\min F_N$ is arbitrarily large as $N\to +\infty$. This can be done by
concentrating the points $Y_N$ on lower-dimensional subspaces of
$\Rsp^d$, as in Remarks~\ref{rmk:badcritical} and \ref{rmk:optimassump}.

When applying gradient descent to the nonconvex optimization problem
\eqref{eq:WGAN}, it is in principle possible to end up on local
minima corresponding to a high energy critical points of the
Wasserstein distance, regardless of the non-linearity of the map
$\theta \mapsto T_{\theta\# \sigma}$. Our main theorem, or rather its
Corollary~\ref{cor:PL} shows that if the points of $Y$ are at distance at least
$\eps>0$ from one another, then
\begin{equation*}
  F_N(Y) - C \frac{1}{N\eps^{d-1}} \leq N \nr{\nabla
    F_N(Y)}^2.
\end{equation*}
In the previous equation $ \nr{\nabla F_N(Y)}$ denotes the Euclidean
norm of the vector in $\R^{Nd}$ obtained by putting one after the
other the gradients of $F_N$ w.r.t. the positions of the atoms
$y_i$. We note that due to the weights $1/N$ in the atomic measure
$\delta_Y$, the components of this vector are in general of the order
of $1/N$, see Proposition~\ref{prop:gradF}. This inequality resembles
the Polyak-\L ojasiewicz inequality, and shows in particular that if
the quantization error $F_N(Y) = \Wass_2^2(\rho,\delta_Y)$ is large,
i.e. larger than $\eps^{1-d}/N$, then the point cloud $Y$ is not
critical for $F_N$. From this, we deduce in \cref{thm:gf} that if the
points in the initial cloud are not too close to each other at the
initialization, then the iterates of fixed step gradient descent
converge to points with low energy $F_N$, despite the non-convexity of
$F_N$.
\begin{figure}[t]                                                                                
  \centering
  \begin{tabular}{m{.22\textwidth}m{.22\textwidth}m{.22\textwidth}m{.22\textwidth}}
        \includegraphics[width=0.22\textwidth]{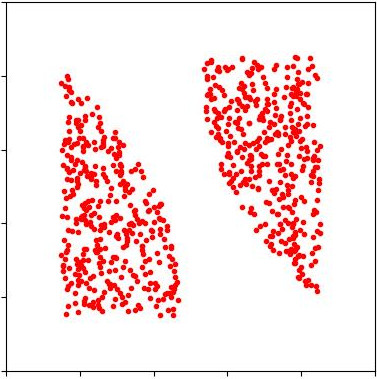}&
        \includegraphics[width=0.215\textwidth]{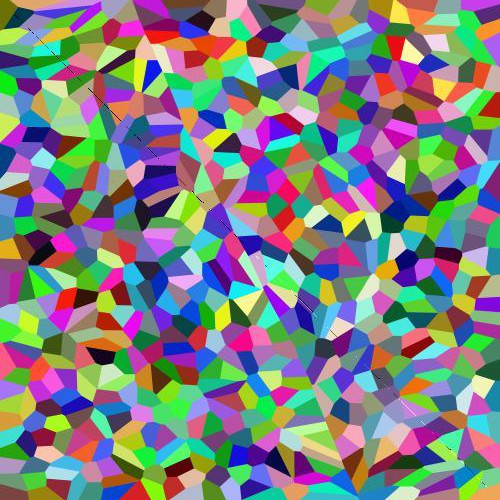}&
        \includegraphics[width=0.22\textwidth]{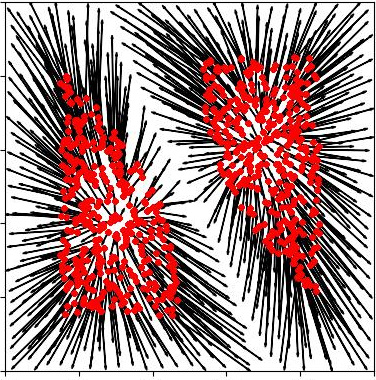}&
        \includegraphics[width=0.22\textwidth]{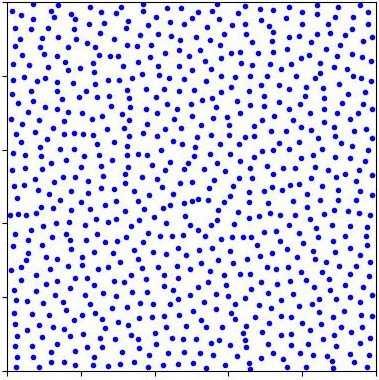}
  \end{tabular}
  \caption{\label{fig:lloyd}From left to right, a point cloud $Y^0$ in the square $\Omega=[0;1]\times[0;1]$, the associated power cells $P_i(Y)$ in the optimal transport to the Lebesgue measure on $\Omega$, the vectors $-N \nabla F_N(Y^0) = B_N(Y^0)-Y^0$ followed during the Lloyd step and the positions of the barycenters $Y^1 = B_N(Y)$.}
\end{figure}

\paragraph{Relation to optimal quantization}
Our main result also has implications in terms of the \emph{uniform
  optimal quantization problem}, where one seeks a point cloud $Y =
(y_1,\hdots,y_N)$ in $(\Rsp^d)^N$ such that the uniform measure
supported over $Y$, denoted $\delta_Y$, is as close as possible to the
model distribution $\rho$ with respect to the $2$-Wasserstein
distance:
\begin{equation} 
  \label{pb:main} \min_{Y \in \Omega^N} F_N(Y).
\end{equation}
The uniform optimal quantization problem \eqref{pb:main} is a very
natural variant of the (standard) \emph{optimal quantization problem},
where one does not impose that the measure supported on $Y$ is uniform:
\begin{equation}
  \label{pb:standard} 
  \min_{Y \in (\Rsp^d)^N} G_N(Y),~~ \hbox{ where } G_N(Y) = \min_{\mu\in\Delta_N} \Wass^2_2(\rho,\sum_{i=1}^N \mu_i \delta_{y_i}),
\end{equation}
and where $\Delta_N \subseteq \Rsp^N$ is the probability simplex.  This
standard optimal quantization problem is a cornerstone of sampling
theory, and we refer the reader to the book of Graf and Luschgy
\cite{graf2007foundations} and to the survey by Pagès
\cite{pages2015introduction}. The uniform quantization problem
\eqref{pb:main} is less common, but also very natural.  It has been
used in imaging to produce stipplings of an image
\cite{de2012blue,balzer2009capacity} or for meshing purposes
\cite{goes2014weighted}.  A common difficulty for solving
\eqref{pb:standard} and \eqref{pb:main} numerically is that the
minimized functionals $F_N$ and $G_N$ are non-convex and  have
many critical points with high  energy. However, in practice, simple
fixed-point or gradient descent stategies behave well when the initial
point cloud is not chosen adversely. Our second contribution is a
quantitative explanation for this good behaviour in the case of the
uniform optimal quantization problem.

Lloyd's algorithm \cite{lloyd1982least} is a fixed point algorithm for
solving approximately the standard optimal quantization problem
\eqref{pb:standard}. Starting from a point cloud $Y^k =
(y^k_1,\hdots,y^k_N) \in (\Rsp^{d})^N$ with distinct points, one
defines the next iterate $Y^{k+1}$ in two steps. First, one computes
the Voronoi diagram of $Y$, a tessellation of the space into convex
polyhedra $(V_i(Y^k))_{1\leq i\leq N}$, where
\begin{equation}
	\label{def:vor}
	\Vor_i(Y) = \{ x\in \Omega \mid \forall j\in \{1,\hdots,N\}, \nr{x
		- y_i} \leq \nr{x-y_j}\}.
\end{equation} 
In the second step, one moves every point $y^k_i$ towards the barycenter,
with respect to $\rho$, of the corresponding cell $V_i(Y^k)$.  This
algorithm can also be interpreted as a fixed point algorithm for
solving the first-order optimality condition for \eqref{pb:standard},
i.e. $\nabla G_N(Y) = 0$.
One can show that the energy $(G_N(Y^k))_{k\geq 0}$ decreases in
$k$. The convergence of $Y^k$ towards a critical point of $F_N$ as
$k\to +\infty$ has been studied in \cite{du2006convergence}, but 
the energy of this limit critical point is not guaranteed
to be small.

In the case of the uniform quantization problem \eqref{pb:main}, one
can try to minimize the energy $F_N$ by gradient descent, defining the
iterates
\begin{equation}
  \label{eq:gf}
  Y^{k+1} = Y^k - \tau N\nabla F_N(Y^k),
\end{equation} where $\tau>0$ is the time
step. The factor $N$ in front of $\nabla F_N$ is set as a compensation
for the fact that we have, in general, $\nabla F_N(Y)=O(1/N)$. When
$\tau = 1$, one recovers a version of Lloyd's algorithm for the
uniform quantization problem, involving barycenters $B_N(Y)$ of Power cells,
rather than Voronoi cells, associated to $Y$. More precisely,   Proposition~\ref{prop:gradF} 
proves that  $\nabla F_N(Y)=(Y-B_N(Y))/N$ so that $Y^{k+1} = B_N(Y^k)$ when $\tau = 1$.
Quite surprisingly, we prove in Corollary~\ref{cor:onestep} that if
the points in the initial cloud $Y^0$ are not too close to each other,
then the uniform measure over the point cloud $Y^1 = Y^0 - N\nabla
F_N(Y^0)$ obtained after \emph{only one step} of Lloyd's algorithm is
close to $\rho$. This is illustrated in Figure~\ref{fig:lloyd}. We
prove in particular the following statement.

\begin{thm*}[Particular case of Corollary~\ref{cor:onestep}]
  Let $\rho$ be a probability density over a compact convex set
  $\Omega \subseteq \Rsp^d$, let $Y^0 = (y_1^0,\hdots,y_N^0) \in
  \Omega^d$ and assume that the points lie at some positive distance
  from one another: for some constant $c$,
  $$ \forall i\neq j, \nr{y_i - y_j} \geq c N^{-1/d},$$ corresponding
  for instance to a point cloud sampled on a regular grid. Then, the
  point cloud $Y^1 = Y^0 - N \nabla F_N(Y^0)$ obtained after one step of
  Lloyd's algorithm satisfies
  $$ \Wass_2^2(\delta_{Y^1},\rho) \leq C_{c,d,\Omega} N^{-1/d}, $$
  where $C_{c,d,\Omega}$ is a constant depending on $c,d$ and $\diam(\Omega)$.
\end{thm*}



\paragraph{Outline}
In Section~\ref{sec:deterministic}, we start by a short review of
background material on optimal transport and optimal uniform
quantization. We then establish our main result
(\cref{thm:approx_bary}) on the approximation of a measure $\rho$ by
barycenters of Power cells. This theorem yields error estimates for
one step of Lloyd's algorithm in deterministic and probabilistic
settings (Corollaries \ref{cor:onestep} and \ref{coro:proba}). In
Section~\ref{sec:gf}, we establish a Polyak-\L ojasiewicz-type
inequality (Corollary~\ref{cor:PL}) for the function $F_N =
\frac{1}{2}\Wass_2^2(\rho,\delta_Y)$ introduced in \eqref{eq:F}, and
we study the convergence of a  gradient descent algorithm
for $F_N$ (Theorem~\ref{thm:gf}). Finally, in Section~\ref{sec:num},
we report numerical results on optimal uniform quantization in
dimension $d=2$.




\section{Lloyd's algorithm for optimal uniform quantization}
\label{sec:deterministic}
  
\paragraph*{Optimal transport and Kantorovich duality}
In this section we briefly review Kantorovich duality and its relation
to semidiscrete optimal transport. The cost is fixed to $c(x,y) =
\nr{x-y}^2$, and we assume that $\rho$ is a probability density over a
compact convex domain $\Omega$.  In this setting, Brenier's theorem
implies that given any probability measure $\mu$ supported on
$\Omega$, the optimal transport plan between $\rho$ and $\mu$,
i.e. the minimizer $\pi$ in the definition of the Wasserstein distance
\eqref{eq:wass} with $p=2$, is induced by a transport map
$T_\mu:\Omega\to\Omega$, meaning $\pi=(T_\mu,Id)_\#\rho$.  One can derive
an alternative expression for the Wasserstein distance using
Kantorovich duality, which leads to a more precise description of
the optimal transport map
\cite[Theorem~1.39]{santambrogio2015optimal}:
\begin{equation}\label{eq:kantdual}
  \Wass_2^2(\rho,\mu)=\max_{\phi: Y\to\Rsp} \int_{\R^d} \phi \dd\mu
  + \int_\Omega\phi^c \dd\rho,
\end{equation}
where $\phi^c(x) = \min_{i} c(x,y_i) - \phi_i$.  When $\mu = \delta_Y$
is the uniform probability measure over a point cloud $Y =
(y_1,\hdots,y_N)$ containing $N$ distinct points, we set $\phi_i =
\phi(y_i)$ and we define the \emph{$i$th Power cell} associated to the
couple $(Y,\phi)$ as
\begin{equation*}
	\Pow{i}{Y}{\phi}=\{x\in\Rsp^d \mid \forall j\in\{1,\hdots,N\},~\norm{x-y_i}^2-\phi_i
	\leq \norm{x-y_j}^2-\phi_j\}.
\end{equation*}
Then, the Kantorovich dual \eqref{eq:kantdual} of the optimal transport
problem between $\rho$ and $\delta_Y$ turns into a finite-dimensional
concave maximization problem
\begin{equation} \label{eq:dual}
	\Wass_2^2(\mu,\rho) 
	=  \max_{\phi\in\R^N} \sum_{i=1}^N\frac{\phi_i}{N} +\int_{\Pow{i}{Y}{\phi}}\left(\norm{x-y_i}^2-\phi_i \right)\dd\rho(x)
\end{equation}
By Corollary~1.2 in \cite{KitMerThi17}, a vector $\phi\in\R^N$ is
optimal for this maximization problem if and only if the potential $\phi$ is such
that each Power cell contain the same amount of mass, i.e. if
\begin{equation} \label{eq:otdual}
	\forall i\in\{1,\hdots,N\},~~\rho(\Pow{i}{Y}{\phi})=\frac{1}{N},
\end{equation}
From now on, we denote $P_i(Y) = \Pow{i}{Y}{\phi} \cap \Omega$, where
$\phi \in \Rsp^N$ satisfies \eqref{eq:otdual}. The optimal transport
map $T_Y$ between $\rho$ and $\delta_Y$ sends every Power cell
$P_i(Y)$ to the point $y_i$, i.e. it is  defined $\rho$-almost
everywhere by $\left.{T_Y}\right|_{P_i(Y)} = y_i$.
We refer again to the introduction of \cite{KitMerThi17} for more
details.

\paragraph*{Optimal uniform quantization}
In this article, we study the behaviour of the squared Wasserstein
distance between the (fixed) probability density $\rho$ and a uniform
finitely supported measure $\delta_{Y}$ where $Y = (y_1,\hdots,y_N)$
is a cloud of $N$ points, in terms of variations of $Y$. As in equation
\eqref{eq:F}, we denote $F_N = \frac{1}{2}
\Wass_2^2(\rho,\cdot)$. Proposition 21 in \cite{merigot2016minimal}
  gives an expression for the gradient of $F$, and proves its
  semiconcavity. We recall that $F$ is called $\alpha$--semiconcave,
  with $\alpha\geq 0$, if the function $F -
  \frac{\alpha}{2}\nr{\cdot}^2$ is concave. We denote $\Diag_N$ the
  generalized diagonal
$$\Diag_N=\{Y\in(\R^d)^N \mid \exists i\neq j \hbox{ s.t. } y_i = y_j\}.$$

\begin{prop}[Gradient of $F_N$] \label{prop:gradF}
	The function $F_N$ is $\frac{1}{N}$--semiconcave on $(\R^d)^N$ and is of
        class $\mathcal{C}^1$ on $(\Rsp^d)^{N}\setminus \Diag_N$. In
        addition, for any $Y\in\Diag_N$ one has
	\begin{equation} \label{eq:gradF}
          \forall Y\in (\Rsp^d)^{N}\setminus
        \Diag_N, 
	\nabla F_N(Y) = \frac1N( Y - B_N(Y)), \hbox{ where } B_N(Y) = (b_1(Y),\hdots,b_N(Y))
        \end{equation}
        and where $b_i(Y)$ is the barycenter of the $i$th power cell,
        i.e. $b_i(Y) = N\int_{P_i(Y)}\dd\rho(x)$.
\end{prop}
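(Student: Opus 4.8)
The plan is to establish the two assertions separately, leaning on the semidiscrete description of $\Wass_2^2(\rho,\delta_Y)$ recalled above. For the semiconcavity, I would first record that, the optimal transport plan between $\rho$ and $\delta_Y$ being concentrated on the graph of the map $T_Y$, the value $\Wass_2^2(\rho,\delta_Y)$ equals the minimum over Borel partitions $(A_i)_{i=1}^N$ of $\Omega$ with $\rho(A_i)=1/N$ of $\sum_{i=1}^N\int_{A_i}\nr{x-y_i}^2\dd\rho(x)$: the inequality $\le$ comes from $(P_i(Y))_i$ realizing this value, and $\ge$ from the fact that any such partition defines an admissible coupling. Expanding $\nr{x-y_i}^2=\nr{x}^2-2\langle x,y_i\rangle+\nr{y_i}^2$ and using $\rho(A_i)=1/N$, the $i$-th term equals $c_i(A)-2\langle m_i(A),y_i\rangle+\tfrac1N\nr{y_i}^2$, where $m_i(A)=\int_{A_i}x\dd\rho$ and $c_i(A)$ does not involve $Y$; summing over $i$ shows that $\Wass_2^2(\rho,\delta_Y)-\tfrac1N\sum_i\nr{y_i}^2$ is an infimum of affine functions of $Y\in(\R^d)^N$, hence concave. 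Equivalently $F_N-\tfrac1{2N}\nr{\cdot}^2$ is concave, i.e. $F_N$ is $\tfrac1N$-semiconcave; in particular it is locally Lipschitz, hence differentiable Lebesgue-a.e.\ on $(\R^d)^N$.

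For the gradient and the $\mathcal C^1$ regularity, fix $Y_0\notin\Diag_N$ and let $\phi$ satisfy \eqref{eq:otdual}. Since $\rho$ is a density, Brenier's theorem makes the optimal transport map, hence the minimizing partition above, unique up to $\rho$-negligible sets, and it is $(P_i(Y_0))_i$. Freezing this partition, the function $g(Y)=\sum_i\int_{P_i(Y_0)}\nr{x-y_i}^2\dd\rho$ is an exact quadratic with Hessian $\tfrac2N\mathrm{Id}$ and $\nabla g(Y_0)=\tfrac2N\bigl(Y_0-B_N(Y_0)\bigr)$. Because $2F_N\le g$ everywhere with equality at $Y_0$, the vector $v:=\tfrac1N(Y_0-B_N(Y_0))$ satisfies $F_N(Y)\le F_N(Y_0)+\langle v,Y-Y_0\rangle+\tfrac1{2N}\nr{Y-Y_0}^2$ for all $Y$, so $v$ lies in the superdifferential of $F_N$ at $Y_0$. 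To conclude $\nabla F_N(Y_0)=v$ I would prove the matching lower bound: exchanging the roles of $Y_0$ and a nearby $Y\notin\Diag_N$ in the previous inequality gives $2F_N(Y)\ge 2F_N(Y_0)+\langle w(Y),Y-Y_0\rangle-\tfrac1N\nr{Y-Y_0}^2$, where $w(Y)$ has $i$-th block $\tfrac2N y_i-2\int_{P_i(Y)}x\dd\rho$; hence it is enough to check that $b_i(Y)=N\int_{P_i(Y)}x\dd\rho\to b_i(Y_0)$ as $Y\to Y_0$. Combining the two bounds then yields $F_N(Y)=F_N(Y_0)+\langle v,Y-Y_0\rangle+o(\nr{Y-Y_0})$, i.e.\ $\nabla F_N(Y_0)=\tfrac1N(Y_0-B_N(Y_0))$. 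Finally, $F_N$ is a finite concave function plus $\tfrac1{2N}\nr{\cdot}^2$ and is differentiable on the open set $(\R^d)^N\setminus\Diag_N$, so its gradient is automatically continuous there; equivalently, $\mathcal C^1$ regularity follows from continuity of $Y\mapsto B_N(Y)$.

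The main obstacle is exactly this continuity $b_i(Y)\to b_i(Y_0)$, i.e.\ the stability of semidiscrete optimal transport near (but off) the diagonal — all the convex-analytic steps above are routine. I would derive it from the optimality system \eqref{eq:otdual}, which characterizes the potential $\phi(Y)$ up to an additive constant, together with the continuity of $(Y,\phi)\mapsto\bigl(\rho(\Pow{i}{Y}{\phi}\cap\Omega)\bigr)_i$ and a compactness argument showing that a normalized optimal potential stays bounded for $Y$ in a neighbourhood of $Y_0$ contained in $(\R^d)^N\setminus\Diag_N$; these give continuity of $Y\mapsto\phi(Y)$, and since $\rho$ is a density the cell boundaries are $\rho$-negligible, so $\rho\bigl(P_i(Y)\triangle P_i(Y_0)\bigr)\to0$ and the barycenters converge. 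Alternatively one may simply quote the quantitative stability estimates of \cite{KitMerThi17}. It is worth noting that the hypothesis $Y\notin\Diag_N$ is genuinely used here: if $y_i=y_j$ the common preimage of that point may be split between cells $i$ and $j$ in many ways, $B_N$ is not well defined, and $F_N$ is then in general only semiconcave, not differentiable.
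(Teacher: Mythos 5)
Your proof is correct. There is nothing in the paper to compare it against line by line: the authors do not prove Proposition~\ref{prop:gradF} but defer to Proposition~21 of \cite{merigot2016minimal} (and to \cite{ambrosio2008gradient}, Theorem~7.3.2, for the semiconcavity), so what you have written is the missing self-contained argument, and it follows the standard route. Part (i), writing $\Wass_2^2(\rho,\delta_Y)-\frac1N\sum_i\nr{y_i}^2$ as an infimum of functions affine in $Y$, is exactly how the $\frac1N$-semiconcavity is usually obtained; part (ii), freezing the optimal partition at $Y_0$ to get a quadratic majorant touching $F_N$ at $Y_0$ and then symmetrizing to get the matching minorant, is an envelope-theorem argument on the primal, equivalent to applying Danskin's theorem to the dual \eqref{eq:dual} (whose partial derivative in $y_i$ at the optimal $\phi$ is $\frac2N y_i-2\int_{P_i(Y)}x\,\dd\rho(x)$). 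You also correctly isolate the only non-routine ingredient, the continuity of $Y\mapsto B_N(Y)$ on $(\Rsp^d)^N\setminus\Diag_N$, and your sketch is the right one: compactness of normalized potentials, continuity of the dual functional, $\rho$-negligibility of the cell boundaries, together with the observation that all optimal potentials induce the same cells up to $\rho$-null sets because the optimal plan is unique; quoting the stability results of \cite{KitMerThi17} instead is equally acceptable. One cosmetic point: to get semiconcavity on all of $(\Rsp^d)^N$, including $\Diag_N$, the identity ``$\Wass_2^2=\min$ over equal-mass partitions'' needs a small purification step when points coincide (split the common preimage into pieces of mass $1/N$, which costs nothing since they are sent to the same location); alternatively, take the infimum over decompositions $\rho=\sum_i\rho_i$ with $\rho_i(\Omega)=1/N$, which is still affine in $Y$ term by term and sidesteps the issue. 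Incidentally, your argument also exposes a typo in the statement: the barycenter should read $b_i(Y)=N\int_{P_i(Y)}x\,\dd\rho(x)$.
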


It is not difficult to prove that $F_N$ admits at least one minimizer,
and that this minimizer $Y$ satisfies the first-order optimality
condition $Y = B_N(Y)$. A point cloud that satisfies this condition is
called \emph{critical}.

\begin{rmk}[Upper bound on the minimum of $F_N$]\label{rmk:rateminF}
    We note from \cite[Proposition~12]{merigot2016minimal}  that when $\rho$ is supported on a compact subset of $\Rsp^d$, then
    \begin{equation}\label{eq:rateminF}
       \min F_N = \min_{Y \in (\Rsp^{d})^N} \frac{1}{2}\Wass_2^2(\rho,\delta_Y)    \lesssim
    \begin{cases}
      N^{-\frac{2}{d}}&\hbox{ if } d>2\\
        N^{-1} \log N &\hbox{ if } d=2\\
          N^{-1}&\hbox{ if } d=1.\\
    \end{cases}
    \end{equation}
  These upper bounds may not be tight, in particular when $\rho$ is
  separable (see \cref{app:gaussian_behaviour}).
\end{rmk}
  
\begin{minipage}{.7\textwidth}
\begin{rmk}[High energy critical points] \label{rmk:badcritical}
  On the other hand, since $F_N$ is not convex, this first-order
  condition is not sufficient to have a minimizer of $F_N$.  For
  instance, if $\rho \equiv 1$ on the unit square $\Omega = [0,1]^2$,
  one can check that the point cloud
	$$Y_N = \left( \left(\frac{1}{2N}, \frac{1}{2}\right),
  \left(\frac{3}{2N}, \frac{1}{2}\right),\hdots,
  \left(\frac{2N-1}{2N},\frac{1}{2}\right) \right)$$
\end{rmk}
\end{minipage}\hfill
\begin{minipage}{.23\textwidth}
  \includegraphics[width=\textwidth]{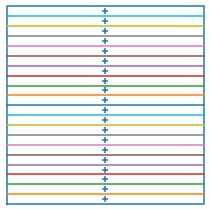}
\end{minipage}\hfill

\noindent is a critical
  point of $F_N$ but not a minimizer of $F_N$. In fact, this critical
  point becomes arbitrarily bad as $N\to +\infty$ in the sense that
        $$ \lim_{N\to +\infty} \frac{F_N(Y_N)}{\min F_N} = +\infty.$$
  On the other hand, we note that the point cloud $Y_N$ is highly
  concentrated, in the sense that the distance between consecutive
  points in $Y_N$ is $\frac{1}{2N}$, whereas in an evenly distributed
  point cloud, one would expect the minimum distance between points to
  be of order $N^{-1/d}$.
  
\paragraph{Gradient descent and Lloyd's algorithm}
One can find a critical point of $F_N$ by following the discrete
gradient flow of $F_N$, defined in \eqref{eq:gf}, starting from an
initial position $Y^0 \in (\Rsp^{d})^N\setminus \Diag_N$. Thanks to
the expression of $\nabla F_N$ given in Proposition~\ref{prop:gradF},
the discrete gradient flow may be written as
\begin{equation}
	\label{eq:discr_flow}
		Y^{k+1} = Y^k + \tau_N (B_N(Y^k) - Y^k),
\end{equation}
where $\tau_N$ is a fixed time step.  For $\tau_N = 1$, one recovers a
variant of Lloyd's algorithm, where one moves every point to the
barycenter of its Power cell $P_i(Y^k)$ at each iteration:
\begin{equation}\label{algo:lloyd2}
  Y^{k+1} = B_N(Y^k)
\end{equation}
We can state the following result about Lloyd's algorithm for the uniform
quantization problem, whose proof is postponed to the appendix.

\begin{prop}\label{prop:cvlloyd}
  Let $N$ be a fixed integer and $(Y^k)_{k\geq 0}$ be the iterates of \eqref{algo:lloyd2}, with
  $Y^0 \not\in\Diag_N$. Then, the energy $k\mapsto F_N(Y^k)$ is
  decreasing, and $\lim_{k\to +\infty} \nr{\nabla F_N(Y^k)} = 0$. Moreover, the sequence $(Y^k)_{k\geq 0}$ belongs to a compact
  subset of $(\Rsp^{d})^N\setminus \Diag_N$ and every limit point of a converging subsequence of it is a critical point for $F_N$.
\end{prop}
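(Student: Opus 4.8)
The plan is to establish the four assertions in turn: the monotonicity of the energy and $\nr{\nabla F_N(Y^k)}\to 0$ come from the standard Lloyd descent computation, adapted to the equal-mass constraint; the compactness away from $\Diag_N$ rests on a uniform lower bound for the distances between barycenters of distinct power cells; and the statement about limit points is then immediate from the $\mathcal{C}^1$ regularity of $F_N$ off the diagonal recorded in Proposition~\ref{prop:gradF}.

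For the monotonicity I would write the transport cost in primal form using the optimal map $T_{Y^k}$ recalled above, so that $2F_N(Y^k)=\sum_i\int_{P_i(Y^k)}\nr{x-y^k_i}^2\dd\rho(x)$. Since every cell $P_i(Y^k)$ carries $\rho$-mass $1/N$, sending it onto the point $b_i(Y^k)$ defines an admissible (in general non-optimal) transport from $\rho$ to $\delta_{Y^{k+1}}$, whence $2F_N(Y^{k+1})\le\sum_i\int_{P_i(Y^k)}\nr{x-b_i(Y^k)}^2\dd\rho(x)$. As $b_i(Y^k)$ is precisely the $\rho$-barycenter of $P_i(Y^k)$, the variance decomposition around the barycenter gives $\int_{P_i(Y^k)}\nr{x-b_i(Y^k)}^2\dd\rho=\int_{P_i(Y^k)}\nr{x-y^k_i}^2\dd\rho-\tfrac1N\nr{y^k_i-b_i(Y^k)}^2$; summing over $i$ and invoking $\nabla F_N(Y^k)=\tfrac1N(Y^k-B_N(Y^k))$ from Proposition~\ref{prop:gradF} yields
\[ F_N(Y^k)-F_N(Y^{k+1}) \;\geq\; \tfrac{N}{2}\,\nr{\nabla F_N(Y^k)}^2 \;\geq\; 0. \]
Hence $F_N(Y^k)$ is non-increasing, and being bounded below by $0$ it converges; telescoping bounds $\sum_k\nr{\nabla F_N(Y^k)}^2$ by $\tfrac2N F_N(Y^0)<\infty$, so in particular $\nr{\nabla F_N(Y^k)}\to 0$.

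For the compactness I would first observe that $b_i(Y)$, being a $\rho$-average of points of $P_i(Y)\subseteq\Omega$, lies in the convex compact set $\Omega$, so $Y^k\in\Omega^N$ for every $k\ge 1$ and the orbit is bounded. The heart of the matter is the estimate: there is $\eta_1>0$, depending on $N$ and $\rho$ only, such that $\nr{b_i(Y)-b_j(Y)}\ge\eta_1$ for all $i\neq j$ and all $Y\notin\Diag_N$. To see it, note that $P_i(Y)$ and $P_j(Y)$ lie on opposite sides of the hyperplane $\{x:\nr{x-y_i}^2-\phi_i=\nr{x-y_j}^2-\phi_j\}$, with unit normal $u=(y_j-y_i)/\nr{y_j-y_i}$, say $P_i(Y)\subseteq\{\langle x,u\rangle\le c\}$ and $P_j(Y)\subseteq\{\langle x,u\rangle\ge c\}$. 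Since $\rho$ is a density, its image under $x\mapsto\langle x,u\rangle$ is absolutely continuous on $\R$, so the modulus $\omega(\eps):=\sup_{c'\in\R}\rho(\{c'\le\langle x,u\rangle\le c'+\eps\})$ tends to $0$ as $\eps\to 0$, and we may fix $\eps_N>0$ with $\omega(\eps_N)<\tfrac1{2N}$. A layer-cake computation then gives $c-\langle b_i(Y),u\rangle=N\int_0^{\infty}\rho\big(P_i(Y)\cap\{\langle x,u\rangle<c-t\}\big)\dd t\ge N\int_0^{\eps_N}\big(\tfrac1N-\omega(t)\big)\dd t\ge\tfrac{\eps_N}{2}$, and symmetrically $\langle b_j(Y),u\rangle-c\ge\tfrac{\eps_N}{2}$, whence $\nr{b_i(Y)-b_j(Y)}\ge\langle b_j(Y)-b_i(Y),u\rangle\ge\eps_N=:\eta_1$ (when $\rho$ has bounded density one may take $\eta_1$ of order $(N\nr{\rho}_\infty\diam(\Omega)^{d-1})^{-1}$). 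Consequently, for $k\ge 1$ the cloud $Y^k=B_N(Y^{k-1})$ has all pairwise distances $\ge\eta_1$; combining with the hypothesis $Y^0\notin\Diag_N$, i.e. pairwise distances $\ge\eta_0>0$, and choosing $R$ so that $Y^0$ and $\Omega^N$ are contained in $(\overline{B}(0,R))^N$, the whole sequence lies in the compact set $\{Y\in(\overline{B}(0,R))^N:\min_{i\neq j}\nr{y_i-y_j}\ge\min(\eta_0,\eta_1)\}$, which is disjoint from $\Diag_N$.

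Finally, if $Y^{\star}$ is the limit of a convergent subsequence $Y^{k_m}$, then $Y^{\star}\notin\Diag_N$ by the previous step, so $F_N$ is $\mathcal{C}^1$ near $Y^{\star}$ by Proposition~\ref{prop:gradF}, and $\nabla F_N(Y^{\star})=\lim_m\nabla F_N(Y^{k_m})=0$; equivalently $Y^{\star}=B_N(Y^{\star})$, i.e. $Y^{\star}$ is critical. The single non-routine ingredient is the uniform separation of the barycenters of the power cells; the points to handle with care there are that the separation constant is only required to be positive for $N$ fixed (so it may legitimately blow up as $N\to\infty$), and that plain absolute continuity of $\rho$ — rather than a quantitative upper bound on its density — already suffices to force $\omega(\eps)\to 0$.
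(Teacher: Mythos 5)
Your proof is correct and follows essentially the same route as the paper's: the identical variance-decomposition descent estimate gives monotonicity of $F_N(Y^k)$ and summability of $\nr{\nabla F_N(Y^k)}^2$, and the compactness away from $\Diag_N$ rests, as in the paper, on using absolute continuity of $\rho$ to show that each barycenter is quantitatively separated from the hyperplane bounding its power cell. The only cosmetic differences are that you bound the orbit by $Y^k\in\Omega^N$ rather than via second moments, and that you quantify the barycenter separation with a layer-cake argument in the direction $y_j-y_i$ instead of the paper's median-level argument face by face --- for which you should just add that the modulus $\omega(\eps)$ can be chosen uniformly in the direction $u$ (a slab of width $\eps$ meets $B(0,R)$ in Lebesgue measure at most $\eps\,(2R)^{d-1}$ regardless of $u$, so uniform integrability of $\rho$ applies uniformly).
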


Experiments suggest that following the discrete gradient flow of
$F_N$ does not bring us to high energy critical points of $F_N$, such
as those described in Remark~\ref{rmk:badcritical}, unless we started
from an adversely chosen point cloud. The following theorem and its
corollaries, the main results of this article, backs up this
experimental evidence. It shows that if the point cloud $Y$ is not too
concentrated, then the uniform measure over the barycenters of the
power cells, $\delta_{B_N(Y)}$, is a good quantization of the
probability density $\rho$, i.e. it bounds the quantization error
after one step of Lloyd's algorithm \eqref{algo:lloyd2}.

We will use the following notation for $\eps>0$:
$$ I_\eps(Y) = \{ i\in\{1,\hdots,N\}\mid \forall j \neq i, \nr{y_i - y_j}\geq \eps\}. $$
$$ \Diag_{N,\eps} = \{ Y \in (\Rsp^{N})^d \mid \exists i \neq j,
\nr{y_i - y_j}\leq \eps \}. $$ Note that $\Diag_{N,\eps}$ is an
$\eps$-neighborhood around the generalized diagonal $\Diag_N$.

\begin{thm}[Quantization by barycenters]
  \label{thm:approx_bary}
  Let $\Omega\subseteq \Rsp^d$ be a compact convex set, $\rho$ a
  probability density on $\Omega$ and consider a point cloud
  $Y=(y_1,\dots,y_N)$ in $\Omega^N \setminus\Diag_N$.  Then, for all $0 <\eps\leq 1$,
	\begin{equation}
		\Wass^2_2\left(\rho,\delta_{B_N(Y)}\right)\leq C_{d,\Omega}\left(\frac{\eps^{1-d}}{N} + 1-\frac{\Card(I_\eps(Y))}{N}\right).		
	\end{equation}
where
$C_{d,\Omega}= \frac{2^{2d-1}}{\omega_{d-1}} (\diam(\Omega)+1)^{d+1}$
and where $\omega_{d-1}$ is the volume of the unit ball in $\Rsp^{d-1}$.
\end{thm}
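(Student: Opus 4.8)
The plan is to bound $\Wass_2^2(\rho, \delta_{B_N(Y)})$ by transporting mass from $\rho$ to the barycenters $b_i(Y)$ along the transport plan already induced by the Power cells. Recall that the optimal transport map $T_Y$ sends all of $P_i(Y)$ to $y_i$, while we want to compare $\rho$ to the measure sitting at the barycenters $b_i(Y)$. So the first step is to use the (suboptimal) transport plan that sends the portion of $\rho$ in $P_i(Y)$ to the Dirac at $b_i(Y)$; this immediately gives
\begin{equation*}
  \Wass_2^2(\rho,\delta_{B_N(Y)}) \leq \sum_{i=1}^N \int_{P_i(Y)} \nr{x - b_i(Y)}^2 \dd\rho(x) = \sum_{i=1}^N \operatorname{Var}_i,
\end{equation*}
where $\operatorname{Var}_i$ is the $\rho$-variance of the $i$th Power cell about its own barycenter. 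Thus everything reduces to controlling the total variance $\sum_i \operatorname{Var}_i$, and the point is that a Power cell with small mass $1/N$ and nondegenerate geometry cannot have large variance.

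The heart of the argument is therefore a geometric estimate: if $y_i$ is $\eps$-separated from the other points (i.e. $i \in I_\eps(Y)$), then $\operatorname{Var}_i \leq C_{d,\Omega}\, \eps^{1-d}/N^2$. The mechanism is that a Power cell is an intersection of half-spaces, in particular it is convex, and for each $j$ the defining inequality $\nr{x-y_i}^2 - \phi_i \leq \nr{x-y_j}^2 - \phi_j$ is a half-space whose boundary hyperplane is orthogonal to $y_i - y_j$. The key observation is that the "width" of $P_i(Y)$ in the direction $y_i - y_j$ is controlled: if the cell extended a distance $t$ in that direction, then a slab of thickness comparable to $t$ near that far end would have to lie in $P_i(Y)$ — but one can show such a slab contains at least a ball (or a half-ball), whose $\rho$-mass... no: since $\rho$ is only an $L^1$ density, not bounded below, one cannot lower-bound masses of balls directly. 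Instead the correct route is to fix a small scale $r$, observe that $P_i(Y) \cap \Omega$ has $\rho$-mass exactly $1/N$, and show that the part of $P_i(Y)$ at distance $\geq r$ from $y_i$, intersected with a cone of directions, can be "covered" by translates related to neighboring cells $P_j$, so that its mass is $O(r^{1-d}/N \cdot \text{stuff})$ — more precisely one integrates the tail $\int_{P_i} \nr{x-y_i}^2 \dd\rho = \int_0^{\diam\Omega^2} \rho(\{x \in P_i : \nr{x-y_i}^2 > s\})\dd s$ and bounds the super-level mass by comparing, for each radius $t$, the annular piece of $P_i$ at radius $t$ with disjoint translates of $P_j$-like regions whose total mass is $\leq 1$; the separation $\eps$ enters because at most $O((t/\eps)^{d-1})$ such translates are needed to cover the annulus, giving a bound of the shape $\rho(\{x\in P_i : \nr{x-y_i}\geq t\}) \lesssim (\eps/t)^{d-1}$, and then $\int_0^{\diam\Omega} t \cdot (\eps/t)^{d-1}\,\mathrm{d}t \lesssim \eps^{d-1}(\diam\Omega)^{3-d}$ for $d<3$ — the careful bookkeeping of which translates cover the annulus, and getting the clean constant $2^{2d-1}/\omega_{d-1}(\diam\Omega+1)^{d+1}$, is the routine-but-delicate part. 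This geometric covering lemma is what I expect to be the main obstacle.

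Once the per-cell bound $\operatorname{Var}_i \leq C_{d,\Omega}\,\eps^{1-d}/N^2$ is in hand for $i \in I_\eps(Y)$, I would handle the "bad" indices $i \notin I_\eps(Y)$ by the trivial bound $\operatorname{Var}_i \leq (\diam\Omega)^2 \cdot \rho(P_i(Y)) = (\diam\Omega)^2/N$, since the whole cell lies in $\Omega$. Summing,
\begin{equation*}
  \sum_{i=1}^N \operatorname{Var}_i \leq \Card(I_\eps(Y))\cdot \frac{C_{d,\Omega}\,\eps^{1-d}}{N^2} + \bigl(N - \Card(I_\eps(Y))\bigr)\cdot \frac{(\diam\Omega)^2}{N} \leq C_{d,\Omega}\left(\frac{\eps^{1-d}}{N} + 1 - \frac{\Card(I_\eps(Y))}{N}\right),
\end{equation*}
after absorbing $(\diam\Omega)^2$ into the constant $C_{d,\Omega}$. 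The restriction $\eps \leq 1$ and the $+1$ shifts in $\diam(\Omega)+1$ are exactly the slack needed to make both terms share one constant. Combining with the first display proves the theorem. I should double-check the edge cases $d=1,2$ (where $\eps^{1-d}$ is $1$ or $\eps^{-1}$ and the integral $\int t(\eps/t)^{d-1}\mathrm{d}t$ still converges on the bounded range $[0,\diam\Omega]$) and the fact that $I_\eps(Y)$ may be empty, in which case the bound is the trivial $\Wass_2^2 \leq (\diam\Omega)^2$, consistent with the right-hand side.
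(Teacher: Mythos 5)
Your reduction is the same as the paper's: transport $\rho|_{P_i(Y)}$ to $b_i(Y)$ to get $\Wass_2^2(\rho,\delta_{B_N(Y)})\leq \sum_i V_i$ with $V_i := \int_{P_i(Y)}\nr{x-b_i(Y)}^2\,\dd\rho$, and use the crude bound $V_i\leq(\diam\Omega)^2/N$ for $i\notin I_\eps(Y)$. The gap is the per-cell claim $V_i \leq C_{d,\Omega}\,\eps^{1-d}/N^2$ for $i\in I_\eps(Y)$, and it is genuinely false. Isolation of $y_i$ constrains the \emph{Voronoi} cell near $y_i$, but says nothing about how large the \emph{Power} cell $P_i(Y)$ can be, because the latter also depends on the dual potential $\phi$. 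For example take $\Omega=[0,1]^2$, $\rho$ Lebesgue, $y_1$ near the bottom edge and $y_2,\hdots,y_N$ tightly clustered near the top: then $y_1$ is $\eps$-isolated for $\eps$ of order one, while $P_1(Y)$ is a thin slab of width $\approx 1$ and area $1/N$, so $V_1 \approx 1/(12N)$, which is far larger than $O(1/N^2)$ as $N\to\infty$. Consistently with this, the heuristic integral you compute, $\int_0^{\diam\Omega}t(\eps/t)^{d-1}\,\dd t$, is $\Theta(\eps^{d-1})$ --- the opposite sign of exponent to the statement's $\eps^{1-d}$ --- and your super-level mass bound has no $1/N$ factor even though $\rho(P_i(Y))=1/N$. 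The covering lemma you flag as the main obstacle is thus not merely delicate: it aims at a per-cell statement that cannot hold.

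The paper's route is structurally different: it proves the \emph{collective} bound $\sum_{i\in I_\eps(Y)}\diam(P_i(Y))\leq C\eps^{1-d}$, then uses $V_i \leq \frac{1}{N}\diam(P_i(Y))^2 \leq \frac{\diam\Omega}{N}\diam(P_i(Y))$ and sums. A single isolated cell can be large, but not all of them, because their interpolants tile $\Omega'$. Concretely, one interpolates the potential $\phi^t=t\phi$ between the Voronoi ($t=0$) and Power ($t=1$) diagrams, observes $(1-t)P_i^0\oplus tP_i^1\subseteq P_i^t$, and notes that isolation puts $\B(y_i,\eps/2)$ inside $P_i^0$ while $P_i^1$ contains a segment of length $\diam(P_i(Y))$; at $t=1/2$ the Minkowski sum therefore contains a prism of base $(d-1)$-volume $\sim\eps^{d-1}$ and length $\diam(P_i(Y))$. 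Since the $P_i^{1/2}$ are disjoint subsets of $\Omega'=\Omega\oplus\B(0,1)$, summing these volumes gives the summed diameter bound, and the theorem follows. To salvage your outline, replace the per-cell variance estimate with this packing argument in the interpolated diagram.
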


The proof relies on arguments from convex geometry. In
particular, we denote $A\oplus B$ the Minkowski sum of sets: $A \oplus
B = \{ a+b \mid (a,b)\in A\times B\}$.
\begin{proof}
	Let $\phi^1 \in\Rsp^N$ be the solution to the dual Kantorovich
        problem \eqref{eq:otdual} between $\rho$ and $\delta_Y$. We
        let $\phi^t = t \phi^1$ and we denote
        $P_i^t=\Pow{i}{Y}{\phi^t}\cap \Omega'$ the $i$th Power cell
        intersected with the slightly enlarged convex set $\Omega' =
        \Omega\oplus \B(0,1)$.  This way, $P_i^1 \supseteq P_i(Y)$
        whereas $P_i^0$ is in fact the intersection of the $i$-th
        Voronoi cell defined in \eqref{def:vor} with $\Omega'$.
	
	We will now prove an upper bound on the sum of the diameters
        of the cells $P_i(Y)$ whose index lies in $I_\eps(Y)$. First,
        we notice the following inclusion, which holds for any $t\in
        [0,1]$:
	\begin{equation}\label{eq:minksum}
		(1-t) P_i^0 \oplus  tP_i^1 \subseteq P_i^t,
	\end{equation}
	Indeed, let $x^0 \in P_i^0$ and $x^1\in P_i^1$, so that for all $j\in
	\{1,\hdots,N\}$ and $k \in \{0,1\}$,
	$$\nr{x^k - y_i}^2 - \phi_i \leq \nr{x^k - y_j}^2 - \phi_j.$$
	Expanding the squares and substracting $\nr{x^k}^2$ on both sides
	these inequalities become \emph{linear} in $\phi_i,\phi_j$ and $x^k$, implying 
	directly that $x^t = (1-t)x^0 + tx^1\subseteq P_i^t$ as desired.
	
 For any index $i\in I_\eps$, the point $y_i$ is at distance at least
 $\eps$ from other points, implying that $\B(0,\frac{\eps}{2})$ is
 contained in the Voronoi cell $V_i(Y)$ with $\Omega'$. Using that
 $P_i^0 = V_i(Y)\cap \Omega'$, that $\Omega' = \Omega \oplus \B(0,1)$
 and that $y_i \in \Omega$, we deduce that $P_i^0$ contains the same
 ball. On the other hand, $P_i^1$ contains a segment $S_i$ of length
 $\diam(P_i^1)$ and inclusion \eqref{eq:minksum} with $t=\frac12$
 gives $$\frac{1}{2}(\B(y_i,\epsilon/2)\oplus S_i)\subseteq
 P_i^{1/2}.$$ The Minkowski sum in the left-hand side contains in
 particular the product of a $(d-1)$-dimensional ball of radius
 $\eps/2$ with an orthogonal segment with length $\diam(P_i^1)\geq
 \diam(P_i(Y))$. Thus,
 $$\frac{1}{2^d}\left(\omega_{d-1} \frac{\eps^{d-1}}{2^{d-1}}
 \diam(P_i(Y))\right)\leq |P_i^\frac{1}{2}|.$$ Using that the Power
 cells $P_i^{\frac{1}{2}}$ form a tesselation of the domain $\Omega'$, we
 therefore obtain
 \begin{equation}
   \label{eq:bound_diam} \sum_{i\in I_\eps(Y)} \diam(P_i(Y)) \leq \frac{2^{2d-1}}{\omega_{d-1}}  \abs{\Omega'} \eps^{1-d} \leq \frac{2^{2d-1}}{\omega_{d-1}} (\diam(\Omega)+1)^d \eps^{1-d}
 \end{equation}
 We now estimate the transport cost between $\delta_{B}$ and the
 density $\rho$, where $B=B_N(Y)$. The transport cost due to the
 points whose indices do not belong to $I_\eps(Y)$ can be bounded in a
 crude way by
 $$ \sum_{i\not\in I_\eps(Y)} \int_{P_i(Y)} \nr{x - y_i}^2 \dd\rho(x)
 \leq (1-\frac{\Card I_\eps(Y)}{N}) \diam(\Omega)^2. $$ Note that we
 used $\rho(P_i(Y)) = \frac{1}{N}$.  On the other hand, the transport
 cost associated with indices in $I_\eps(Y)$ can be bounded using
 \eqref{eq:bound_diam} and $\diam(P_i(Y))\leq \diam(\Omega)$:
$$
\begin{aligned}
   \sum_{i \in I_\eps(Y)} \int_{P_i(Y)} \nr{x - y_i}^2 \dd\rho(x)
&\leq\frac{1}{N} \sum_{i\in I_\eps(Y)} \diam(P_i(Y))^2\\
   &\leq \frac{1}{N} \diam(\Omega)\sum_{i\in I_\eps}\diam(P_i(Y)) \\
   &\leq \frac{2^{2d-1}}{\omega_{d-1}} (\diam(\Omega)+1)^{d+1} \frac{\eps^{1-d}}{N}
\end{aligned}
$$
	In conclusion, we obtain the desired estimate:
	\begin{equation*}
			\Wass_2^2\left(\rho,\delta_{B_N(Y)}\right) 
			\leq
\frac{2^{2d-1}}{\omega_{d-1}} (\diam(\Omega)+1)^{d+1} \frac{\eps^{1-d}}{N} + \diam(\Omega)^2\left(1-\frac{\Card I_\eps}{N}\right).\qedhere
	\end{equation*}
\end{proof}

This theorem could be extended \emph{mutatis mutandis} to the case
where $\rho$ is a general probability measure (i.e. not a
density). However, this would imply some technical complications in
the definition of the barycenters $b_i$ by introducing a
disintegration of $\rho$ with respect to the transport plan $\pi$.  

\paragraph{Consequence for Lloyd's algorithm~\eqref{algo:lloyd2}}
In the next corollary, we assume that any pair of distinct points in
$Y_N \in (\Rsp^{d})^N$ is bounded from below by $\eps_N \geq
CN^{-\beta}$, implying that $I_{\eps_N}(Y_N) = N$. This corresponds to
the value one could expect for a point set uniformly sampled from a
set with Minkowski dimension $\beta$.  When $\beta > d-1$, the
corollary asserts that one step of Lloyd's algorithm is enough to
approximate $\rho$, in the sense that the uniform measure
$\delta_{B_N(Y_N)}$ over the barycenters converges towards $\rho$ as
$N\to +\infty$.

\begin{cor}[Quantization by barycenters, asymptotic case] \label{cor:onestep}
	Assume  $\epsilon_N\geq C\cdot N^{-1/\beta}$ with $C,\beta>0$. Then,
	with $\alpha=1-\frac{d-1}{\beta}$  
	\begin{equation} \label{eq:onestep}
		\forall Y\in (\Rsp^d)^N\setminus \Diag_{\eps_N},\quad \Wass^2_2(\rho, \delta_{B_N(Y)})
		\leq \frac{C_{d,\Omega}}{C^{d-1}}N^{-\alpha}, 
	\end{equation}
	and in particular, if $\beta > d-1$,
	\begin{equation}
		\label{eq:conv_beta}
		\lim_{N\to+\infty} \max_{Y\in (\Rsp^d)^N\setminus\Diag_{\eps_N}} \Wass^2_2(\rho, \delta_{B_N(Y)}) = 0. 
	\end{equation}
\end{cor}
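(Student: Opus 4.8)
The plan is to obtain both assertions as immediate specializations of Theorem~\ref{thm:approx_bary}, the only real work being a judicious choice of the free parameter $\eps$ in that theorem. The key elementary observation is that, by definition of $\Diag_{\eps_N}$, any admissible point cloud $Y\notin\Diag_{\eps_N}$ has all pairwise distances $\nr{y_i-y_j}$ strictly larger than $\eps_N$; hence for every $\eps\le\eps_N$ one has $I_\eps(Y)=\{1,\dots,N\}$, so the term $1-\Card(I_\eps(Y))/N$ appearing in Theorem~\ref{thm:approx_bary} vanishes. What remains is to pick $\eps$ as large as possible under the two constraints $\eps\le\eps_N$ and (as demanded by the theorem) $0<\eps\le1$, since $\eps\mapsto\eps^{1-d}$ is nonincreasing for $d\ge2$ and constant for $d=1$.

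Concretely I would set $\eps=\min(1,\,C\,N^{-1/\beta})$. From $C\,N^{-1/\beta}\le\eps_N$ it follows that $0<\eps\le\eps_N$ and $\eps\le1$, so $I_\eps(Y)=\{1,\dots,N\}$ and Theorem~\ref{thm:approx_bary} yields
\begin{equation*}
  \Wass_2^2(\rho,\delta_{B_N(Y)})\;\le\;C_{d,\Omega}\,\frac{\eps^{1-d}}{N}.
\end{equation*}
In the regime $C\,N^{-1/\beta}\le1$ --- the one relevant for the asymptotics, and the one arising from the standard scaling $\eps_N$ of the order $N^{-1/d}$ with a small constant --- we get $\eps=C\,N^{-1/\beta}$, hence $\eps^{1-d}=C^{1-d}N^{(d-1)/\beta}$ and
\begin{equation*}
  \Wass_2^2(\rho,\delta_{B_N(Y)})\;\le\;\frac{C_{d,\Omega}}{C^{d-1}}\,N^{(d-1)/\beta-1}\;=\;\frac{C_{d,\Omega}}{C^{d-1}}\,N^{-\alpha},\qquad\alpha=1-\frac{d-1}{\beta},
\end{equation*}
which is exactly \eqref{eq:onestep}. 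The complementary regime $C\,N^{-1/\beta}>1$ is degenerate (it forces $\Omega$ to be large relative to $N$) and is handled by taking $\eps=1$, giving $\Wass_2^2\le C_{d,\Omega}/N$; one checks that the stated bound persists there as soon as $C\le1$. The case $d=1$ is trivial, with $\eps^{1-d}\equiv1$ and $\alpha=1$.

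Finally \eqref{eq:conv_beta} is immediate from \eqref{eq:onestep}: that estimate is uniform over $Y\in(\Rsp^d)^N\setminus\Diag_{\eps_N}$, and $\beta>d-1$ forces $\alpha=1-\frac{d-1}{\beta}>0$, whence $N^{-\alpha}\to0$ as $N\to+\infty$. I do not anticipate any genuine obstacle here: the corollary is a direct consequence of Theorem~\ref{thm:approx_bary}, and the only care required is bookkeeping of the sign of $1-d$ and of the borderline values $\eps>1$, neither of which affects the asymptotic conclusion.
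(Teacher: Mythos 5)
Your proof is correct and follows exactly the route the paper intends: Corollary~\ref{cor:onestep} is a direct specialization of Theorem~\ref{thm:approx_bary} with $\eps$ taken to be (essentially) $C N^{-1/\beta}\le\eps_N$, so that $I_\eps(Y)=\{1,\dots,N\}$ kills the second term and the first term evaluates to $\frac{C_{d,\Omega}}{C^{d-1}}N^{-\alpha}$. Your extra bookkeeping of the constraint $\eps\le 1$ and the degenerate regime $CN^{-1/\beta}>1$ is careful but inessential to the asymptotic statement.
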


\begin{rmk}[Optimality of the exponent when $\beta=d$] \label{rmk:optimassump}
	There is no reason to believe that the exponent in the upper
        bound \eqref{eq:onestep} is optimal in general.  However, it
        seems to be optimal in a ``worst-case sense'' when $\beta=d$.
        More precisely, we show the following result in
        \cref{app:gaussian_behaviour}: for any $\delta\in(0,1)$, and
        for every $N=n^d$ ($n\in\N$) there exists a sequence of
        separable probability densities $\rho_N$ over $X = [-1,1]^d$
        ($\rho_N$ is a truncated Gaussian distributions, whose
        variance converges to zero slowly as $N\to +\infty$) such that
        if $Y_N$ is a uniform grid of size $n\times \cdot \times n =
        N^d$ in $X$, then
        $$\Wass_2^2(\delta_{B_N(Y_N)},\rho_N) \geq CN^{-\frac
          {(2-\delta)}{d}},$$
        where $C$ is independent of $N$. On the other hand, in this setting every
        point in $Y_N$ is at distance at least $C N^{-1/d}$ from any
        other point in $Y_N$. Applying \cref{cor:onestep} with $\beta=d$, i.e. $\alpha = \frac{1}{d}$, we get
        $$\Wass_2^2(\delta_{B_N(Y_N)},\rho_N) \leq C' N^{-\frac 1d}.$$
        Comparing this upper bound on
        $\Wass_2^2(\delta_{B_N(Y_N)},\rho_N)$ with the above lower
        bound, one sees that is is not possible to improve the
        exponent.
\end{rmk}
\begin{rmk}[Optimality of \eqref{eq:conv_beta}]\label{rmk:optimbeta}
  The assumption $\beta>d-1$ for \eqref{eq:conv_beta}
is tight: if $\rho$ is the Lebesgue measure on $[0,1]^d$, it is
possible for to construct a point cloud $Y_N$ with $N$ points on the
$(d-1)$-cube $\{\frac{1}{2}\}\times [0,1]^{d-1}$ such that
distinct point in $Y_N$ are at distance at least $\epsilon_N \geq
C\cdot N^{-1/(d-1)}$. Then, the barycenters $B_N(Y_N)$ are also
contained in the cube, so that $\Wass_2^2(\rho,\delta_{B_N(Y_N)}) \geq \frac{1}{12}$.
\end{rmk}

The next corollary is a probabilistic analogue of
Corollary~\ref{cor:onestep}, assuming that the initial point cloud $Y$
is drawn from a probability density $\sigma$ on $\Omega$. Note that
$\sigma$ can be distinct from $\rho$. The proof of this corollary
relies on McDiarmid's inequality to quantify the  proportion of
$\eps$-isolated points in a point  cloud that is  drawn randomly and independently
from $\sigma$. The proof of this result is in
Appendix~\ref{app:proba}.

\begin{cor}[Quantization by barycenters, probabilistic case]
  \label{coro:proba}
	Let $\sigma\in\LL^\infty(\Omega)$ and let $X_1,...,X_N$ be
	i.i.d. random variables with distribution
	$\sigma\in\LL^\infty(\R^d)$. Then, there exists a constant
	$C>0$ depending only on $\|\sigma\|_{\LL^\infty}$ and $d$,
	such that for $N$ large enough,	
	$$\Prob\left(W_2^2\left(\bary{b_i^{X}},\rho\right)\lesssim N^{-\frac{1}{2d-1}} \right)\geq 1-e^{-CN^{\frac{2d-3}{2d-1}}}$$
\end{cor}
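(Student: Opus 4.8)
The plan is to reduce Corollary~\ref{coro:proba} to Theorem~\ref{thm:approx_bary} by controlling, with high probability, the quantity $1 - \Card(I_\eps(X))/N$ that appears in the theorem's right-hand side, and then optimizing over $\eps$. The first step is to fix a radius $\eps>0$ and write $N - \Card(I_\eps(X)) = \Card\{ i : \exists j\neq i,\ \nr{X_i - X_j} < \eps\}$. This count is dominated by $\sum_{i\neq j} \mathbf{1}_{\nr{X_i-X_j}<\eps}$, so its expectation is at most $N(N-1)\sup_{x}\sigma(\B(x,\eps)) \leq N^2 \|\sigma\|_{\LL^\infty} \omega_d \eps^d$. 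Hence $\E[1 - \Card(I_\eps(X))/N] \lesssim N \|\sigma\|_{\LL^\infty}\eps^d$, which suggests taking $\eps$ of order $N^{-\gamma}$ with $\gamma > 1/d$ so that this mean term is small; more precisely one balances $\eps^{1-d}/N \sim N\eps^d$, which gives $\eps \sim N^{-2/(2d-1)}$ and a common value of order $N^{-1/(2d-1)}$, matching the exponent in the statement.

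Next I would upgrade this expectation bound to a concentration statement using McDiarmid's inequality, as the paper announces. The function $(x_1,\dots,x_N) \mapsto \frac1N\Card\{i : \exists j\neq i, \nr{x_i-x_j}<\eps\}$ changes by at most $2/N$ (up to a constant) when a single coordinate $x_k$ is modified: moving $x_k$ can add or remove $x_k$ itself from the "non-isolated" set, and can affect the isolation status of at most... well, this is the delicate point — a priori changing $x_k$ could isolate or de-isolate arbitrarily many $x_j$ if they were all clustered near $x_k$. The standard fix is to note that the set of indices $j$ for which $x_k$ is the \emph{only} near neighbor is what matters, but this is not bounded deterministically. The cleaner route, which I expect the authors use, is to truncate: define a modified functional that counts pairs within distance $\eps$ but caps the per-point contribution, or simply apply McDiarmid to $\sum_{i<j}\mathbf{1}_{\nr{x_i-x_j}<\eps}$ directly, whose bounded-difference constant is the maximal number of points that can lie within $\eps$ of a fixed point — and \emph{that} is where one needs an a priori geometric packing bound. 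Since the $X_i$ need not be $\eps$-separated, one instead works on the event that no ball of radius $\eps$ contains more than $K \sim \log N$ of the points (itself a high-probability event by a union bound over a covering of $\Omega$, since each such count is $\mathrm{Binomial}(N, O(\|\sigma\|_\infty \eps^d))$ with small mean), and on that event the bounded-difference constant is $O(K/N)$.

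Putting the pieces together: on the good event (multiplicity $\le K$ in every $\eps$-ball), McDiarmid gives $1 - \Card(I_\eps(X))/N \le C N\|\sigma\|_\infty \eps^d + t$ except with probability $\exp(-c N t^2 / K^2)$. Choosing $\eps \sim N^{-2/(2d-1)}$ and $t$ comparable to $N\eps^d \sim N^{-(d-1)/(2d-1)}$, the failure probability is $\exp(-c N^{1 - 2(d-1)/(2d-1)}/K^2) = \exp(-c N^{1/(2d-1)}/\log^2 N)$; absorbing the logarithm and slightly weakening the exponent to $\frac{2d-3}{2d-1}$ (as in the statement) covers both this term and the failure probability of the multiplicity event. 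Feeding $\eps$ and this bound on the isolation defect into Theorem~\ref{thm:approx_bary} yields $\Wass_2^2(\delta_{B_N(X)},\rho) \lesssim \eps^{1-d}/N + N\eps^d \sim N^{-1/(2d-1)}$ with the claimed probability. The main obstacle is precisely the bounded-difference step: without an a priori separation one must first establish the logarithmic multiplicity bound and carry the extra $\log N$ factor carefully through the exponents, which is why the final probability exponent $\frac{2d-3}{2d-1}$ is slightly below the "natural" value $\frac{1}{2d-1}$ one would read off from a naive computation.
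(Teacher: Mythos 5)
Your overall strategy is the right one, and your choice $\eps\sim N^{-2/(2d-1)}$ matches the paper's $\eps_N = N^{-1/\beta}$ with $\beta = d-\frac12$. But you miss the key observation that makes McDiarmid apply directly, and the workaround you invent to cover for it is both unnecessary and incorrect.

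The delicate point you flag is not actually delicate. When $x_i$ is removed, a point $x_j$ can only become $\eps$-isolated if $x_i$ was the \emph{unique} other point within distance $\eps$ of $x_j$; i.e.\ $\|x_j-x_i\|<\eps$ and $\|x_j-x_k\|\geq\eps$ for all $k\notin\{i,j\}$. Any two such indices $j,j'$ satisfy $x_j, x_{j'}\in B(x_i,\eps)$ and $\|x_j-x_{j'}\|\geq\eps$, so by a scale-invariant packing argument their number is bounded by a dimensional constant $c_d$, \emph{deterministically and for arbitrary configurations}. Symmetrically at most $c_d$ points become non-isolated when $\tilde x_i$ is inserted, and $x_i$ itself may flip. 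So the bounded-difference constant for $\kappa(X_1,\dots,X_N)=\frac1N\Card(I_\eps(X))$ is $(2c_d+1)/N = O(1/N)$, with no conditioning on a multiplicity event, no logarithmic factors, and no truncation. This is precisely Lemma~\ref{lem:Diarmid} in the paper. Your concern that ``changing $x_k$ could isolate or de-isolate arbitrarily many $x_j$ if they were all clustered near $x_k$'' is unfounded: if many $x_j$ cluster near $x_k$, they are close to \emph{each other}, so removing $x_k$ leaves them non-isolated; the packing bound is exactly what captures this.

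This matters for your final exponent. With $\eps\sim N^{-2/(2d-1)}$ one has $N\eps^d\sim N^{-1/(2d-1)}$ (you later write $N^{-(d-1)/(2d-1)}$, which is an algebra slip), so the correct deviation scale is $t\sim N^{-1/(2d-1)}$, and McDiarmid with bounded difference $O(1/N)$ gives failure probability $\exp(-cNt^2)=\exp(-cN^{(2d-3)/(2d-1)})$, which is exactly the claim. Your version, with the $K\sim\log N$ multiplicity bound, lands on $\exp(-cN^{1/(2d-1)}/\log^2 N)$; for $d\geq3$ this is a \emph{larger} failure probability than $\exp(-CN^{(2d-3)/(2d-1)})$, so it does not prove the corollary and cannot be fixed by ``slightly weakening the exponent.'' Dropping the spurious multiplicity step and using the packing bound for the bounded-difference constant closes the gap.
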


\begin{figure*}[t]                                                                                      
  \begin{minipage}{.48\textwidth}
    \centering        
    \vskip0cm
        \includegraphics[width=0.22\textwidth]{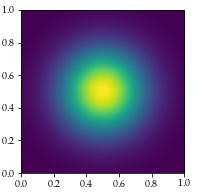}
        \includegraphics[width=0.18\textwidth]{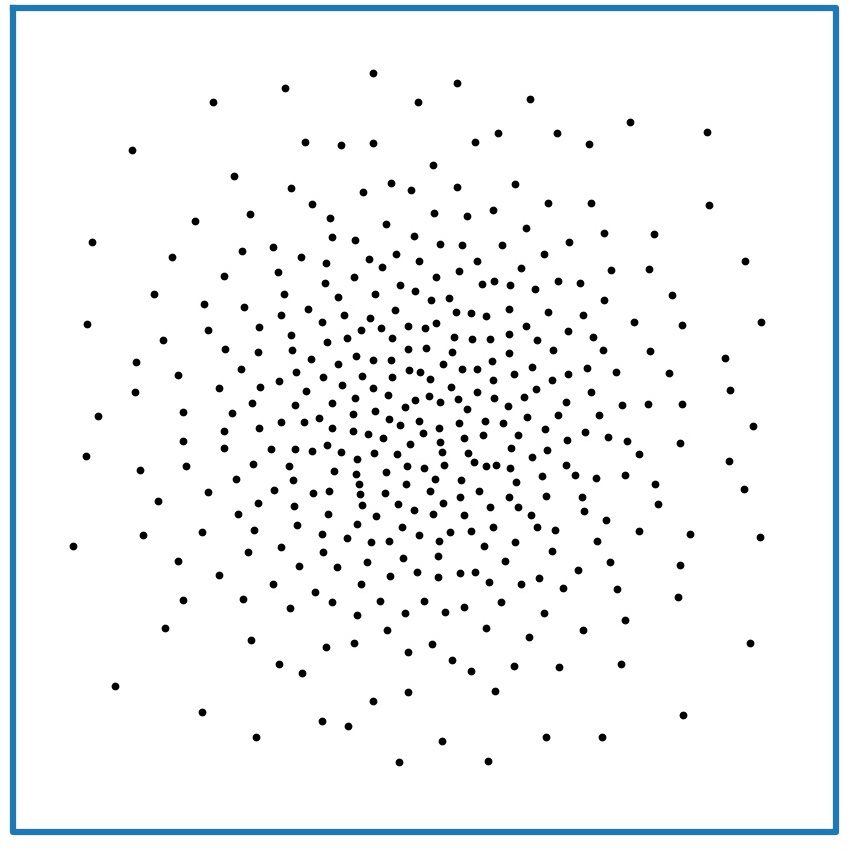}
        \includegraphics[width=0.18\textwidth]{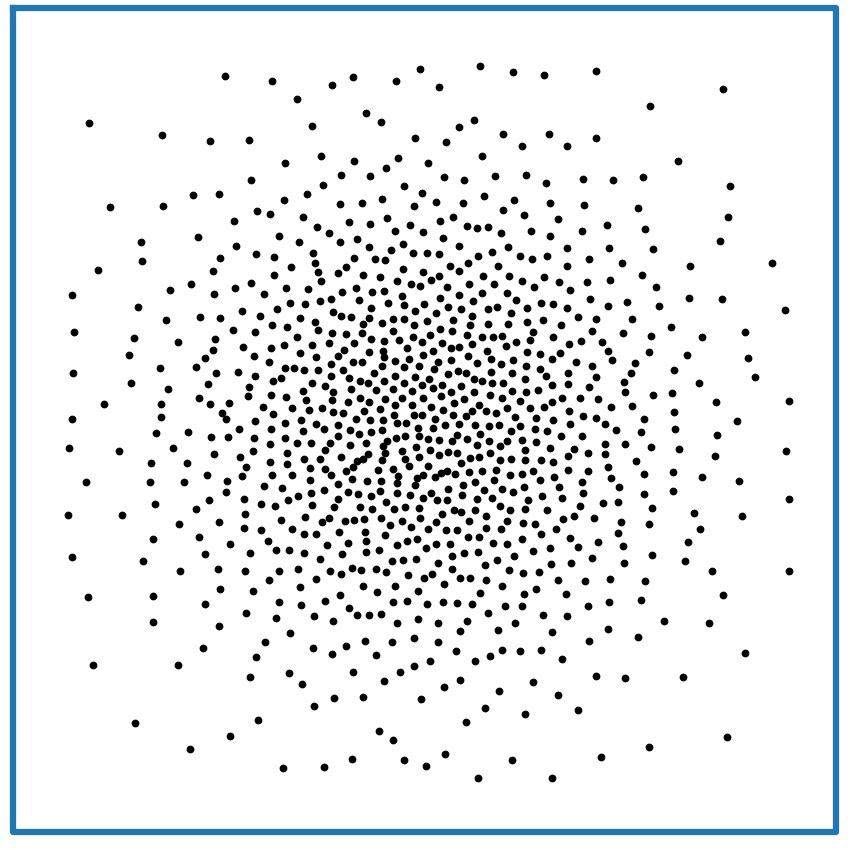}
        \includegraphics[width=0.18\textwidth]{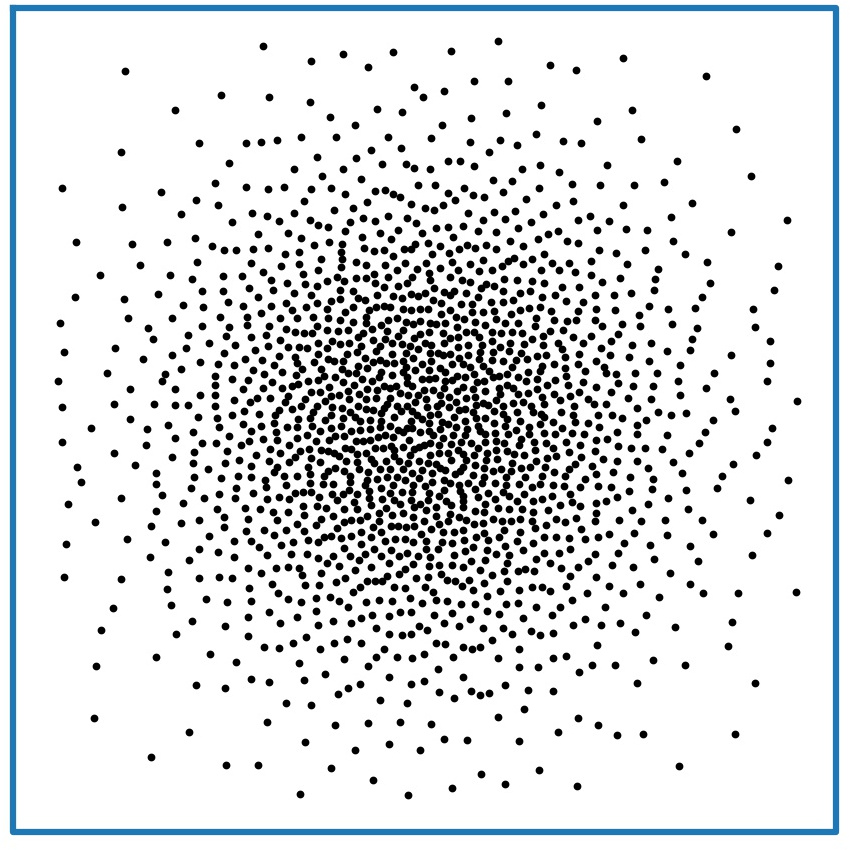}
        \includegraphics[width=0.18\textwidth]{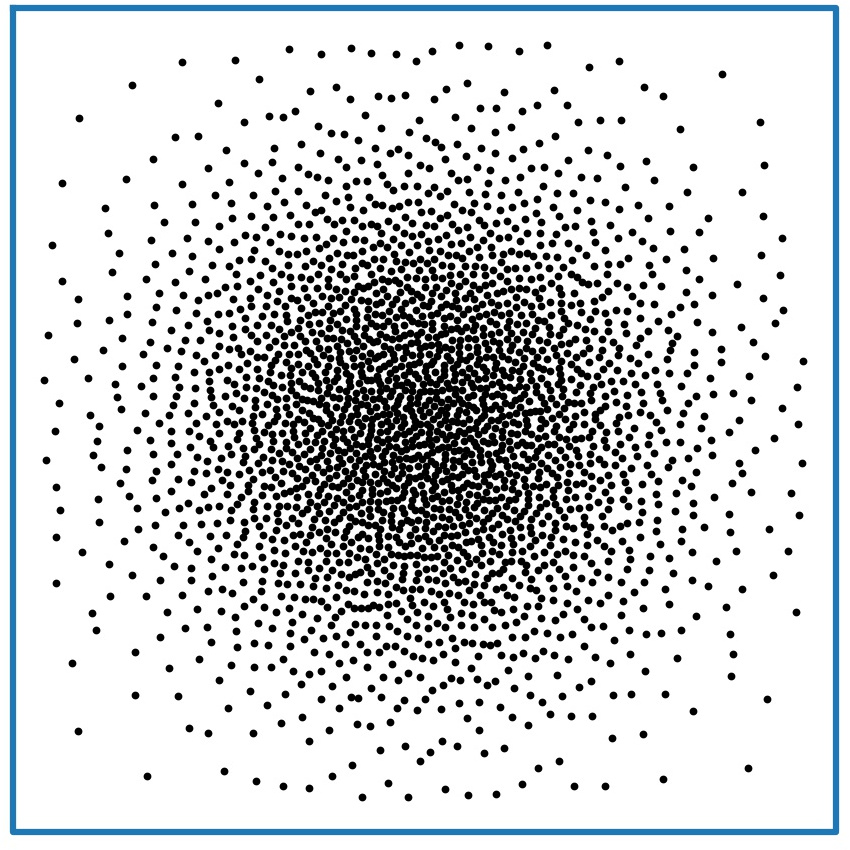}\\
        \includegraphics[width=0.6\textwidth]{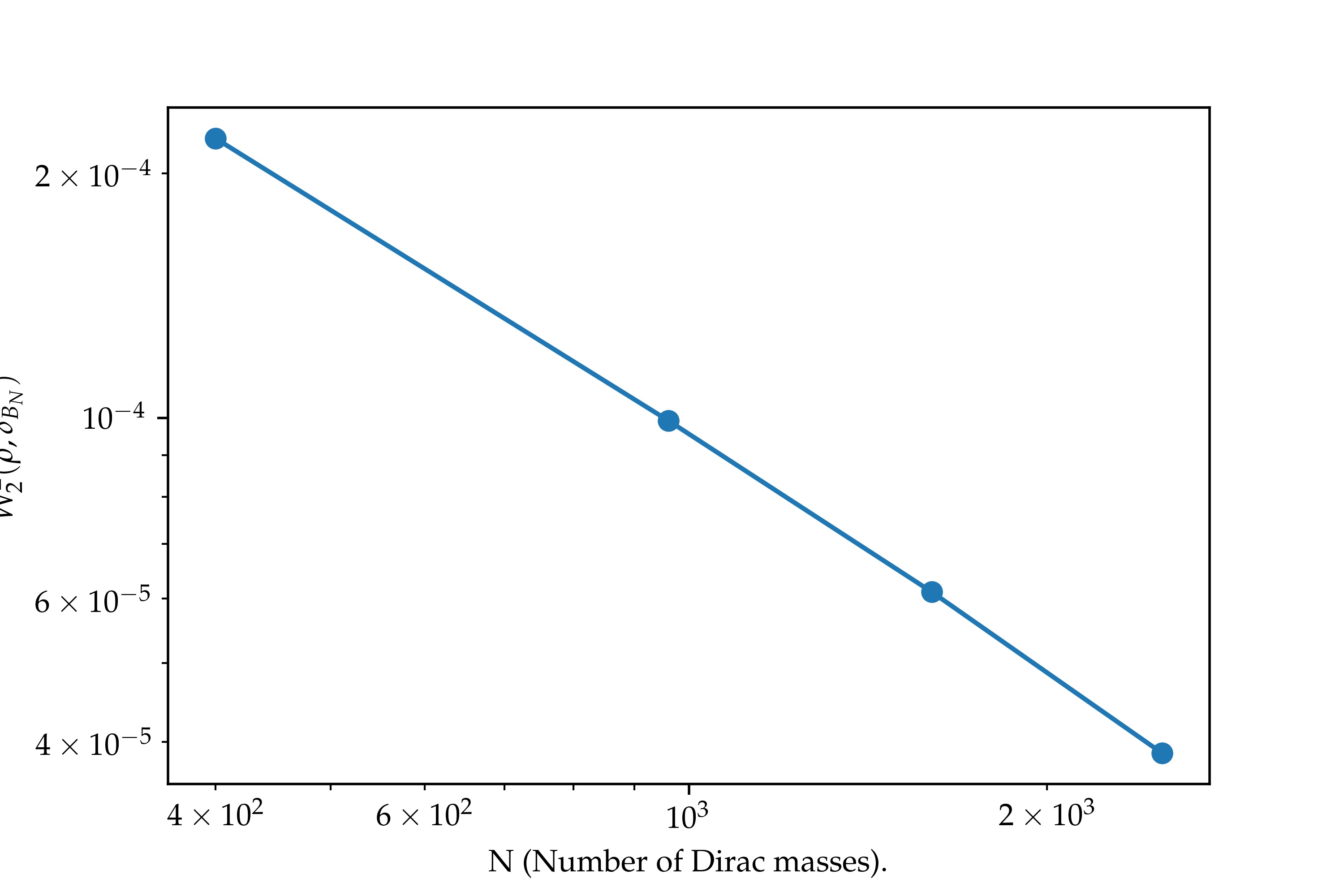}
  \end{minipage}                                                                                        \hfill\vline\hfill
	\begin{minipage}{.48\textwidth}   
		\vskip0pt                                          \centering
	    \includegraphics[width=0.22\textwidth]{figures/Gaussian_map_test.jpeg}                                           
	    \includegraphics[width=0.18\textwidth]{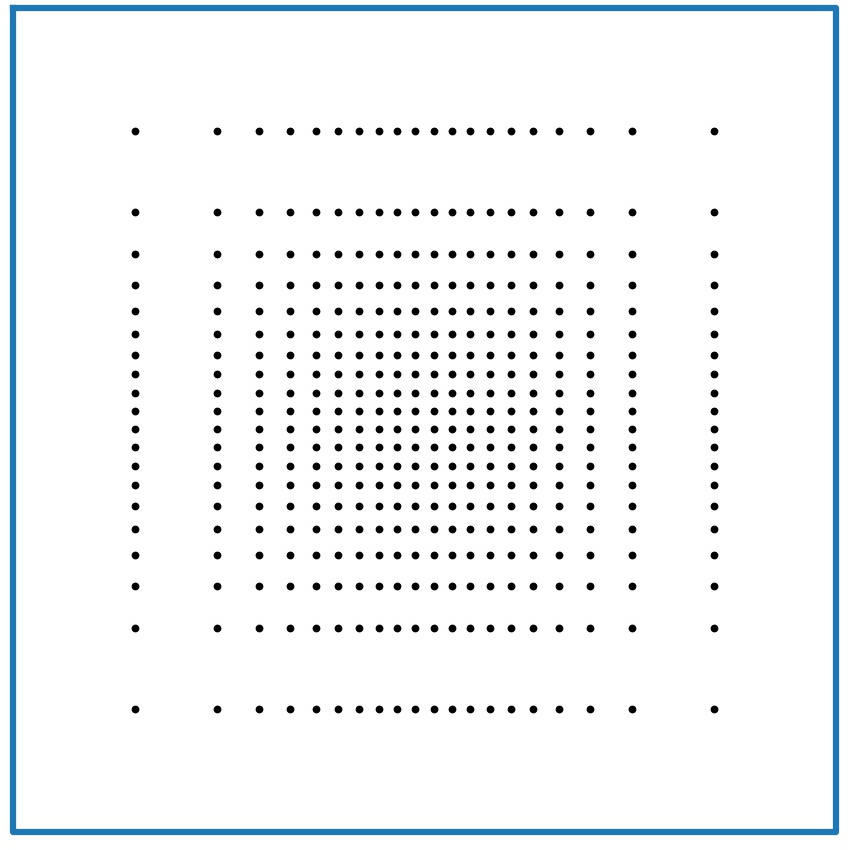}                             
	    \includegraphics[width=0.18\textwidth]{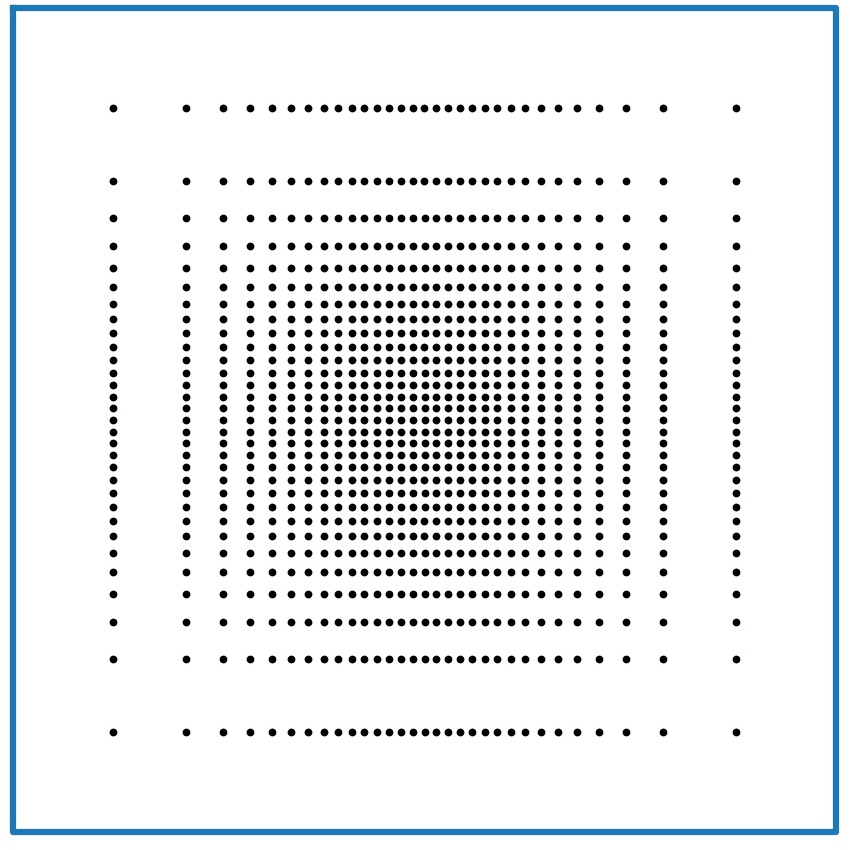}
            \includegraphics[width=0.18\textwidth]{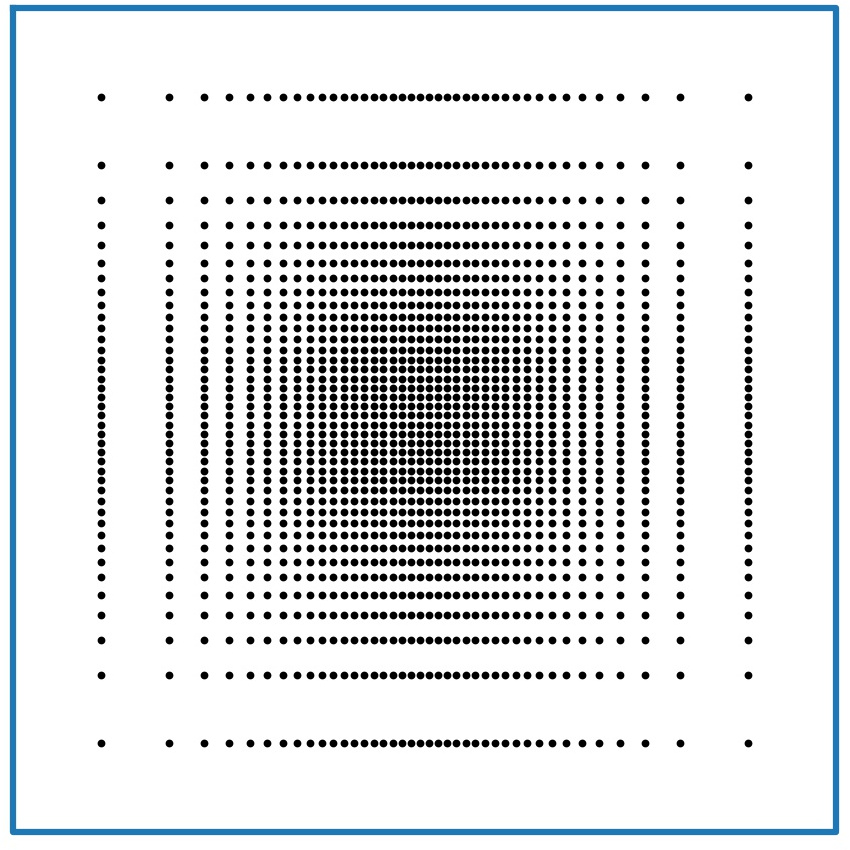}
	    \includegraphics[width=0.18\textwidth]{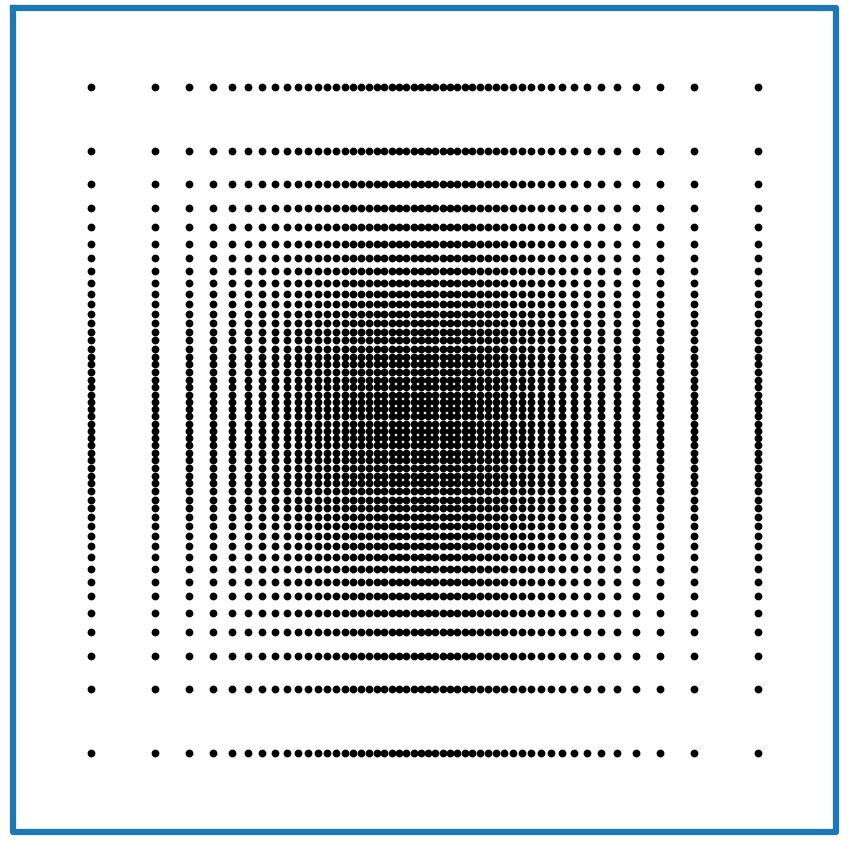}                              
	    \\                             
	    \includegraphics[width=0.6\textwidth]{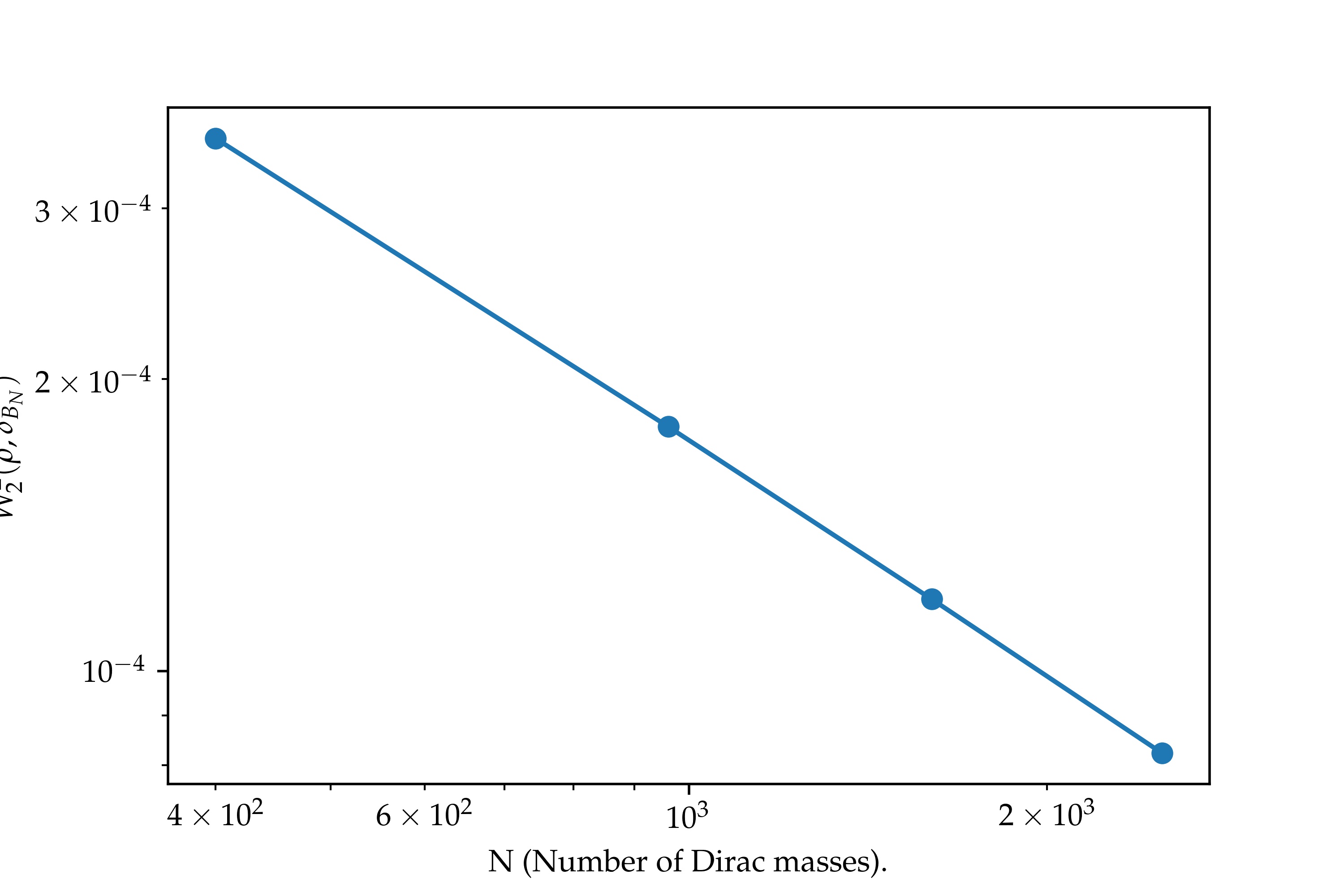}
  \end{minipage}                                                                                       
  \caption{\label{fig:gaussian}Optimal quantization of a Gaussian
    truncated to the unit square. On the left, the initial point cloud
    $Y_N$ is drawn randomly and \emph{uniformly} from $[0,1]^2$, while
    on the right $Y_N$ is on a regular grid. The top row displays the
    point clouds obtained after one step of Lloyd's algorithm. The
    bottom row displays the quantization error after one step of
    Lloyd's algorithm $F_N(B_N(Y_N))$ as a fuction of the number of
    points. We get $F_N(B_N(Y_N))\simeq N^{-0.95}$ when $Y_N$ is a random
    uniform point cloud in $[0,1]^N$ and $F_N(B_N(Y_N)\simeq N^{-0.8}$
    when $Y_N$ is a regular grid.}
\end{figure*}

\section{Gradient flow and a Polyak-\L ojasiewicz-type inequality}\label{sec:gf}

Theorem~\ref{thm:approx_bary} can be interpred as a modified 
Polyak-\L ojasiewicz-type (P\L{} for short) inequality for the function $F_N$. The usual
P\L{} inequality for a differentiable function $F: \Rsp^D\to \Rsp$ is
of the form
$$ \forall Y\in\Rsp^D,\quad F(Y) - \min F \leq C \nr{\nabla
  F(Y)}^2, $$ where $C$ is a positive constant. This inequality has
been originally used by Polyak to prove convergence of gradient
descent towards the global minimum of $F$. Note in particular that
such an inequality implies that any critical point of $F$ is a global
minimum of $F$. By Remark~\ref{rmk:badcritical}, $F_N$ has critical
points that are not minimizers, so that we cannot expect the standard
P\L{} inequality to hold. What we get is a similar inequality relating
$F_N(Y)$ and $\nr{\nabla F_N(Y)}^2$ but with a  term involving
the minimimum distance between the points in place of $\min F_N$.
\begin{cor}[Polyak-\L ojasiewicz-type inequality]\label{cor:PL}
  Let $Y\in(\Rsp^d)^N \setminus \Diag_{N,\eps}$. Then, 
  \begin{equation} \label{eq:PL}
    F_N(Y) - C_{d,\Omega}\frac 1N \left(\frac{1}{\eps}\right)^{d-1} \leq  N\nr{\nabla F_N(Y)}^2
  \end{equation}
  
\end{cor}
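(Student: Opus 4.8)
The plan is to combine the gradient formula from Proposition~\ref{prop:gradF} with the quantization estimate of Theorem~\ref{thm:approx_bary}, glued together by the triangle inequality for $\Wass_2$.

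First I would rewrite both sides of \eqref{eq:PL} in terms of the displacement vector $Y-B_N(Y)$. By Proposition~\ref{prop:gradF} one has $\nabla F_N(Y)=\frac1N(Y-B_N(Y))$, hence
$$ N\nr{\nabla F_N(Y)}^2 = \frac1N\nr{Y-B_N(Y)}^2 = \frac1N\sum_{i=1}^N\nr{y_i-b_i(Y)}^2. $$
On the other hand, the ``diagonal'' coupling $\frac1N\sum_{i=1}^N\delta_{(y_i,b_i(Y))}$ has marginals $\delta_Y$ and $\delta_{B_N(Y)}$, so it is admissible in \eqref{eq:wass} and yields
$$ \Wass_2^2(\delta_Y,\delta_{B_N(Y)}) \leq \frac1N\sum_{i=1}^N\nr{y_i-b_i(Y)}^2 = N\nr{\nabla F_N(Y)}^2. $$
Next, since $Y\notin\Diag_{N,\eps}$, every pair of points of $Y$ is at distance at least $\eps$, i.e. $I_\eps(Y)=\{1,\dots,N\}$ and $\Card(I_\eps(Y))=N$. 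Plugging this into Theorem~\ref{thm:approx_bary} the second term drops out and we obtain $\Wass_2^2(\rho,\delta_{B_N(Y)})\leq C_{d,\Omega}\,\eps^{1-d}/N$.

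Finally I would combine these two estimates through the triangle inequality $\Wass_2(\rho,\delta_Y)\leq \Wass_2(\rho,\delta_{B_N(Y)})+\Wass_2(\delta_{B_N(Y)},\delta_Y)$, squaring and using $2ab\leq a^2+b^2$:
$$ 2F_N(Y) = \Wass_2^2(\rho,\delta_Y) \leq 2\Wass_2^2(\rho,\delta_{B_N(Y)}) + 2\Wass_2^2(\delta_{B_N(Y)},\delta_Y) \leq \frac{2C_{d,\Omega}}{N}\left(\frac1\eps\right)^{d-1} + 2N\nr{\nabla F_N(Y)}^2. $$
Dividing by $2$ and rearranging gives exactly \eqref{eq:PL}.

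I do not expect a real obstacle here; this is genuinely a corollary. The two points that require a little care are: (i) recognizing that the hypothesis $Y\notin\Diag_{N,\eps}$ is precisely what kills the ``fraction of non-isolated points'' term $1-\Card(I_\eps(Y))/N$ in Theorem~\ref{thm:approx_bary}; and (ii) choosing the triangle-inequality route rather than the Pythagorean-type identity $\sum_i\int_{P_i(Y)}\nr{x-b_i(Y)}^2\dd\rho = \Wass_2^2(\rho,\delta_Y)-\frac1N\sum_i\nr{y_i-b_i(Y)}^2$ (valid because $b_i(Y)$ is the $\rho$-barycenter of $P_i(Y)$): the latter only bounds $\Wass_2^2(\rho,\delta_{B_N(Y)})$ from \emph{above} by $\Wass_2^2(\rho,\delta_Y)$, which is the wrong direction for deducing a lower bound on $\nr{\nabla F_N(Y)}^2$.
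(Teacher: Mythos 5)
Your proof is correct and follows essentially the same route as the paper's: the diagonal coupling bound $\Wass_2^2(\delta_Y,\delta_{B_N(Y)})\leq \frac1N\nr{Y-B_N(Y)}^2 = N\nr{\nabla F_N(Y)}^2$, the triangle inequality with $(a+b)^2\leq 2a^2+2b^2$, and Theorem~\ref{thm:approx_bary} applied with $\Card(I_\eps(Y))=N$. Your closing remark correctly identifies why the Pythagorean identity alone would go the wrong way, which is a useful sanity check but not needed for the argument.
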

We note that when $\eps \simeq \left(\frac1N\right)^{1/d}$, the term
$\frac1N \left(\frac{1}{\eps}\right)^{d-1}$ in \eqref{eq:PL} has order
$\left(\frac{1}{N}\right)^{1/d}$. On the other hand, as recalled in
\cref{rmk:rateminF}, $\min F_N \lesssim
\left(\frac{1}{N}\right)^{2/d}$ when $d>2$. Thus, we do not expect
 \eqref{eq:PL} to be tight.

\paragraph{Convergence of a discrete gradient flow}
The  modified Polyak-\L ojasiewicz inequality
\eqref{eq:PL} suggests that the discrete gradient flow
\ref{eq:discr_flow} will bring us close to a point cloud with low
Wasserstein distance to $\rho$, provided that  can guarantee that the
the points clouds $Y^k$ remain far for generalized diagonal 
during the iterations. We prove in Lemma~\ref{lem:stretch} in \cref{app:gf}
that if $Y^{k+1} = Y^k - \tau_N \nabla F_N(Y^k)$ and $\tau_N \in (0,1)$, then
\begin{equation}
	\label{ineq:res_gw}
	\forall i\neq j, \quad \norm{y^{k+1}_i-y^{k+1}_j} \geq (1 - \tau_N) \nr{y^k_i - y^k_j}.
\end{equation}
We note that this inequality  ensures that $Y^k$ never touches the generalized diagonal $\Diag_N$, so that the
gradient $\nabla F_N(Y^k)$ is well-defined at each step. Combining
this inequality with \cref{thm:approx_bary}, one can actually prove
that if the points in the initial cloud $Y^0_N$ are not too close to
each other, then a few steps of gradient discrete gradient descent
leads to a discrete measure $Y^k_N$ that is close to the target
$\rho$. Precisely, we arrive at the following theorem, proved in \cref{app:gf}.

\begin{thm} \label{thm:gf}
  Let $0<\alpha<\frac{1}{d-1}-\frac{1}{d}$, $\epsilon_N\gtrsim
  N^{-\frac{1}{d}-\alpha}$, and $Y^0_N \in
  \Omega^N\setminus\Diag_{\eps_N}$.  Let $(Y_N^k)_k$ be the iterates
  of \eqref{eq:discr_flow} starting from $Y_N^0$ with timestep
  $0<\tau_N<1$. We assume that $\lim_{N\to\infty}\tau_N=0$ and we set
	$$ k_{N}
	= \Partentf{\frac{1}{d\tau_N}\ln(F_N(Y_N^0)N\epsilon_N^{d-1})}.$$
	Then,
        \begin{equation}\label{eq:cvgd}
	  \Wass_2^2\left(\rho,\delta_{Y_N^{k_N}}\right)=O_{N\to\infty} \left( \Wass_2^2\left(\rho,\delta_{Y_N^0}\right)^{1-\frac{1}{d}} .N^{\frac{-1}{d^2}+\alpha\left(1-\frac 1d\right)} \right).
        \end{equation}
\end{thm}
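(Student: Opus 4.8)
The plan is to combine three ingredients: the contraction estimate \eqref{ineq:res_gw} on pairwise distances under the gradient-descent step, the modified P\L{} inequality of Corollary~\ref{cor:PL}, and a monotone energy-decay estimate along the discrete flow \eqref{eq:discr_flow}. First I would use \eqref{ineq:res_gw} iterated $k$ times: since $Y_N^0\notin\Diag_{\eps_N}$, for every $k$ one has $Y_N^k\notin\Diag_{(1-\tau_N)^k\eps_N}$, i.e. the minimal pairwise distance at step $k$ is at least $\eps_N^{(k)}:=(1-\tau_N)^k\eps_N$. Crucially, for $k\le k_N$ with $k_N$ as in the statement, $(1-\tau_N)^{k_N}\ge e^{-\tau_N k_N/(1-\tau_N)}$ stays bounded below by a fixed fractional power of $N$ (for $N$ large, using $\tau_N\to 0$), so that $\eps_N^{(k)}\gtrsim N^{-\frac1d-\alpha-o(1)}$ uniformly in $k\le k_N$. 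This keeps all iterates in the region where Corollary~\ref{cor:PL} applies with a controlled error term $\mathcal E_N^{(k)}:=C_{d,\Omega}\frac1N(\eps_N^{(k)})^{1-d}$, which is of order $N^{-\frac1{d^2}+\alpha(d-1)/d+o(1)}$ up to the constants, and in particular $o(1)$ under the hypothesis $\alpha<\frac1{d-1}-\frac1d$.

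Next I would derive the energy recursion. Using the $\frac1N$-semiconcavity of $F_N$ (Proposition~\ref{prop:gradF}) together with $Y_N^{k+1}=Y_N^k-\tau_N\nabla F_N(Y_N^k)$ and $\tau_N<1$, a standard descent-lemma computation gives
\begin{equation*}
  F_N(Y_N^{k+1}) \le F_N(Y_N^k) - \tau_N\Bigl(1-\tfrac{\tau_N}{2}\Bigr)\nr{\nabla F_N(Y_N^k)}^2,
\end{equation*}
hence in particular $F_N(Y_N^{k+1})\le F_N(Y_N^k)-\frac{\tau_N}{2}\nr{\nabla F_N(Y_N^k)}^2$. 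Feeding in Corollary~\ref{cor:PL} in the form $\nr{\nabla F_N(Y_N^k)}^2\ge \frac1N\bigl(F_N(Y_N^k)-\mathcal E_N^{(k)}\bigr)$ (valid for $k\le k_N$ by the previous paragraph), and using that $\mathcal E_N^{(k)}\le \mathcal E_N^{(k_N)}=:\mathcal E_N$ is nondecreasing in $k$, one gets the affine recursion
\begin{equation*}
  F_N(Y_N^{k+1}) - \mathcal E_N \le \Bigl(1-\tfrac{\tau_N}{2N}\Bigr)\bigl(F_N(Y_N^k)-\mathcal E_N\bigr).
\end{equation*}
Wait --- the factor $\tau_N/(2N)$ would make the contraction far too slow; here I should instead use the P\L{} inequality \emph{without} the spurious $1/N$, i.e. read Corollary~\ref{cor:PL} as $N\nr{\nabla F_N}^2\ge F_N-\mathcal E$, so that the descent lemma combined with it yields $F_N(Y^{k+1})-\mathcal E_N\le(1-\tfrac{\tau_N}{2})(F_N(Y^k)-\mathcal E_N)$. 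Iterating from $0$ to $k_N$ gives
\begin{equation*}
  F_N(Y_N^{k_N}) \le \mathcal E_N + (1-\tfrac{\tau_N}{2})^{k_N}\,F_N(Y_N^0) \le \mathcal E_N + e^{-\tau_N k_N/2}\,F_N(Y_N^0).
\end{equation*}

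Finally I would plug in the choice $k_N=\bigl\lceil\frac1{d\tau_N}\ln(F_N(Y_N^0)N\eps_N^{d-1})\bigr\rceil$. With this choice $e^{-\tau_N k_N/2}F_N(Y_N^0)\le (F_N(Y_N^0)N\eps_N^{d-1})^{-1/(2d)}F_N(Y_N^0)=F_N(Y_N^0)^{1-1/(2d)}(N\eps_N^{d-1})^{-1/(2d)}$, and a short computation with $\eps_N\gtrsim N^{-1/d-\alpha}$ shows this is dominated by the right-hand side of \eqref{eq:cvgd} (indeed both the transported-energy factor $F_N(Y_N^0)^{1-1/d}$ and the power of $N$ come out with at least the claimed exponents after bounding $F_N(Y_N^0)\le\frac12\diam(\Omega)^2$ where needed); similarly $\mathcal E_N\lesssim \frac1N(\eps_N^{(k_N)})^{1-d}$, and using $(1-\tau_N)^{k_N}\gtrsim (N\eps_N^{d-1}\,F_N(Y_N^0))^{-(1-\tau_N)/(d\tau_N)}\to$ a controlled power, one checks $\mathcal E_N=O(N^{-1/d^2+\alpha(1-1/d)})$. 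Matching the two contributions against $\Wass_2^2(\rho,\delta_{Y_N^0})^{1-1/d}N^{-1/d^2+\alpha(1-1/d)}$ gives \eqref{eq:cvgd}, and one uses $F_N=\frac12\Wass_2^2(\rho,\cdot)$ throughout to pass between $F_N$ and $\Wass_2^2$.

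The main obstacle is the bookkeeping in the second paragraph: one must choose the stopping time $k_N$ so that simultaneously (i) the accumulated contraction $(1-\tau_N)^{k_N}$ of the pairwise distances does not push the iterates into $\Diag_{N,\cdot}$ at a scale where the P\L{} error term $\mathcal E_N$ blows up, and (ii) the geometric energy decay $(1-\tau_N/2)^{k_N}$ has brought $F_N(Y_N^0)$ down to the same order as $\mathcal E_N$, so the two terms balance. The logarithmic choice of $k_N$ in the statement is precisely what makes these two competing requirements compatible once $\alpha<\frac1{d-1}-\frac1d$; verifying that the exponent of $N$ that emerges is exactly $-\frac1{d^2}+\alpha(1-\frac1d)$, and that the dependence on the initial error is $\Wass_2^2(\rho,\delta_{Y_N^0})^{1-1/d}$ and not some other power, is the delicate part of the argument and where I would spend most of the care.
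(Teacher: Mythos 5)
Your strategy is exactly the paper's: Lemma~\ref{lem:stretch} to keep the iterates out of $\Diag_{N,(1-\tau_N)^k\eps_N}$, the $\frac1N$-semiconcavity descent estimate combined with \cref{thm:approx_bary} (equivalently, with \cref{cor:PL}) to get an affine recursion on $F_N(Y_N^k)$, and the choice of $k_N$ to balance the geometric decay of the initial energy against the growing error term $\mathcal E_N^{(k)}$. Your simplification of replacing the $k$-dependent error by its value at $k_N$ is harmless, since $\mathcal E_N^{(k)}$ is nondecreasing in $k$ and $\mathcal E_N^{(k_N)}\simeq \frac{\eps_N^{1-d}}{N}e^{(d-1)\tau_Nk_N}$ has the same order as the full sum the paper computes in Lemma~\ref{lem:gf}.

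There is, however, one concrete quantitative slip that, as written, breaks the claimed exponent. After correctly obtaining $F_N(Y_N^{k+1})\le F_N(Y_N^k)-\tau_N(1-\tfrac{\tau_N}{2})\,N\nr{\nabla F_N(Y_N^k)}^2$, you weaken $1-\tfrac{\tau_N}{2}$ to $\tfrac12$ and end up with the contraction factor $1-\tfrac{\tau_N}{2}$ per step, hence $e^{-\tau_Nk_N/2}F_N(Y_N^0)\simeq F_N(Y_N^0)^{1-\frac{1}{2d}}(N\eps_N^{d-1})^{-\frac{1}{2d}}$. Since $N\eps_N^{d-1}\to\infty$ under the hypothesis $\alpha<\frac{1}{d(d-1)}$, this term decays at only half the claimed rate and is \emph{not} dominated by the right-hand side of \eqref{eq:cvgd} (e.g. when $F_N(Y_N^0)$ is of constant order it gives $N^{\frac12(-\frac{1}{d^2}+\alpha(1-\frac1d))}$ instead of $N^{-\frac{1}{d^2}+\alpha(1-\frac1d)}$). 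The fix is immediate: since $\tau_N\to0$, keep the full coefficient $\tau_N(1-\tfrac{\tau_N}{2})$, which is exactly the paper's $1-\eta_N$; then the contraction is $\eta_N^{k_N}\simeq e^{-\tau_Nk_N}=(F_N(Y_N^0)N\eps_N^{d-1})^{-1/d}$ and both contributions land on $F_N(Y_N^0)^{1-\frac1d}(N\eps_N^{d-1})^{-\frac1d}\lesssim F_N(Y_N^0)^{1-\frac1d}N^{-\frac{1}{d^2}+\alpha(1-\frac1d)}$, as required.
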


\begin{rmk}\label{rmk:gf}
	Note that the exponential behavior implied by
        \ref{ineq:res_gw} and \cref{lem:stretch} is coherent with the
        estimates that are known in the absolutely continuous setting
        for the continuous gradient flow. When transitioning from
        discrete measures to probability densities, lower bounds on
        the distance between points become upper bounds on the
        density. The gradient flow $\dot{\mu}_t = \frac{1}{2}
        \nabla_\mu \Wass_2^2(\rho,\mu_t)$ has an explicit solution
        $\mu_t=\sigma_{1-e^{-t}}$, where $\sigma$ is a constant-speed
        geodesic in the Wasserstein space with $\sigma_0=\mu_0$ and
        $\sigma_1=\rho$. In this case, a simple adaptation of the
        estimates in Theorem 2 in \cite{santambrogio2009absolute}
        shows the bound $\nr{\mu_t}_{\LL^\infty}\leq e^{td}
        \nr{\mu_0}_{\LL^\infty}.$ Still in this absolutely continous
        setting, it is possible to remove the exponential growth if
        the target density is also bounded, as a consequence of
        \emph{displacement convexity} \cite[Theorem
          2.2]{mccann1997convexity}. There seems to be no discrete
        counterpart to this argument, explaining in part the
        discrepancy between the exponent of $N$ in \eqref{eq:cvgd}
        with the one obtained in \cref{cor:onestep}.
\end{rmk}

%

\begin{figure} \label{fig:Kanto_Lloyd}
	\centering\hfill
        \begin{minipage}{.56\textwidth}
	  {{\includegraphics[width=2.3cm]{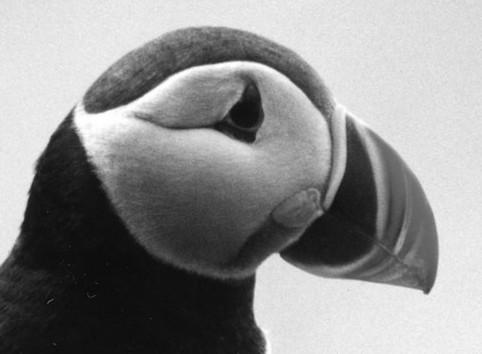}}}
	{{\includegraphics[width=2.5cm]{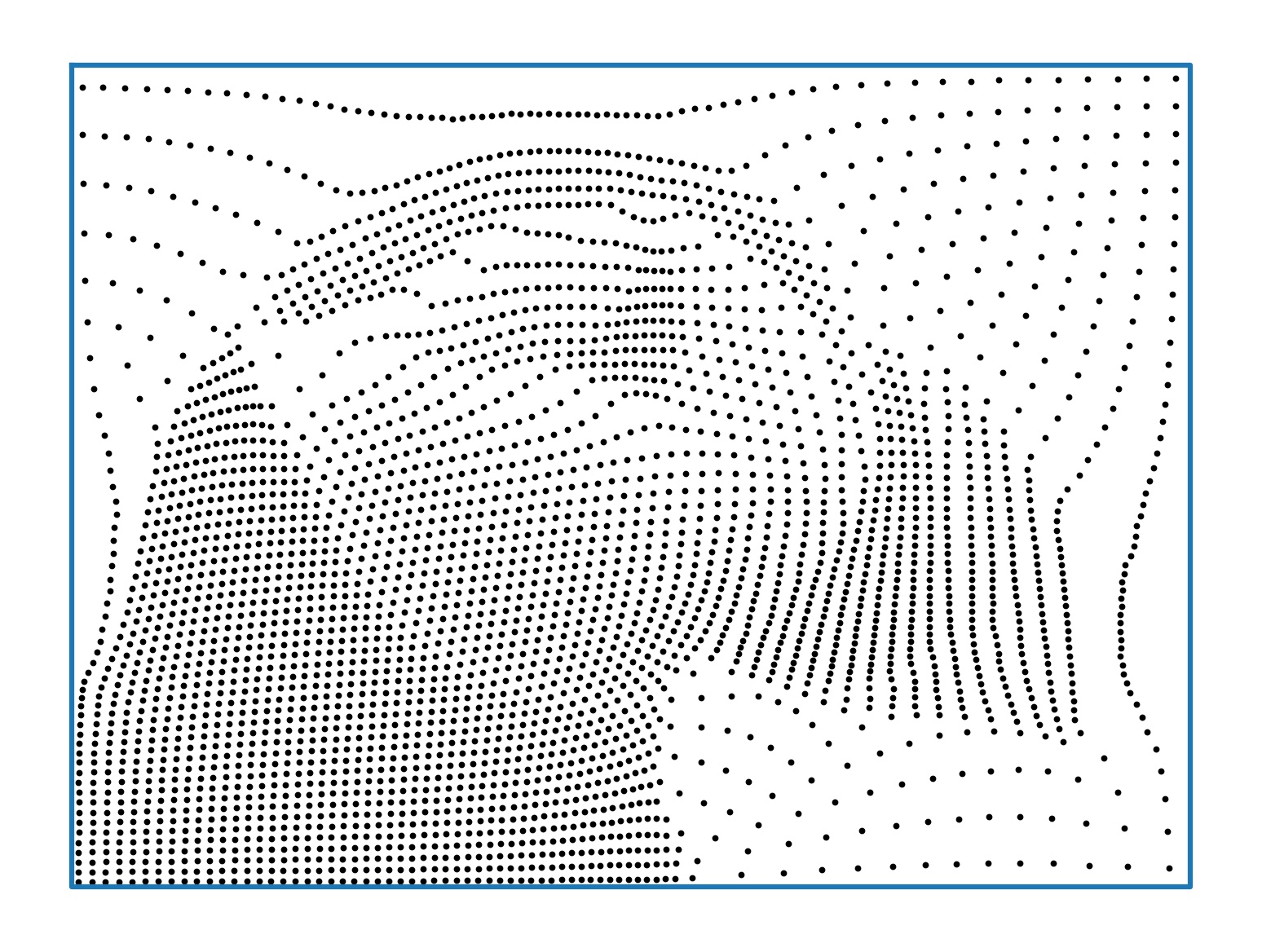}}}
	{{\includegraphics[width=2.5cm]{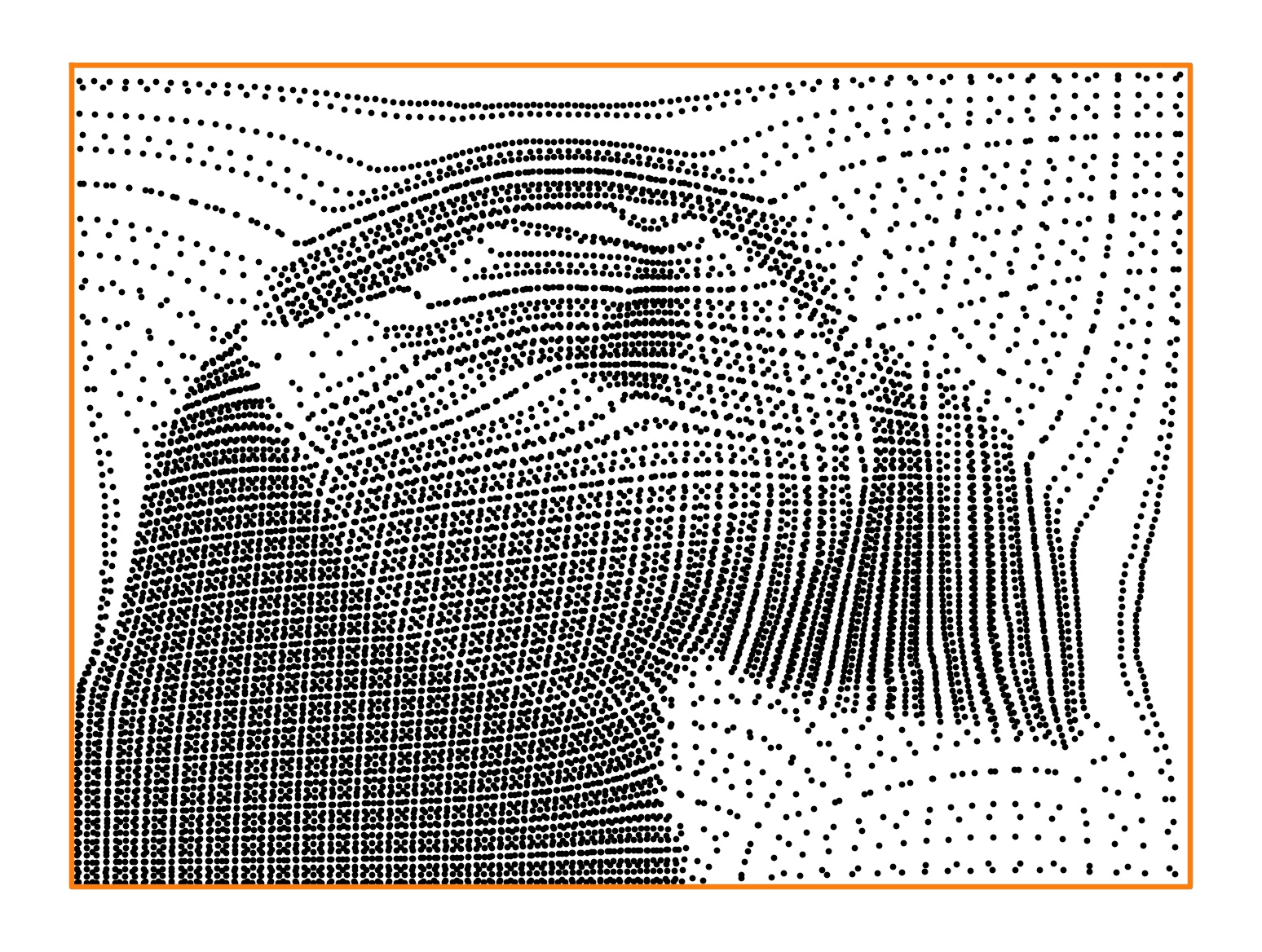}}}\\
        \phantom{{{\includegraphics[width=2.3cm]{figures/Puffin_gray.jpg}}}}
	{{\includegraphics[width=2.5cm]{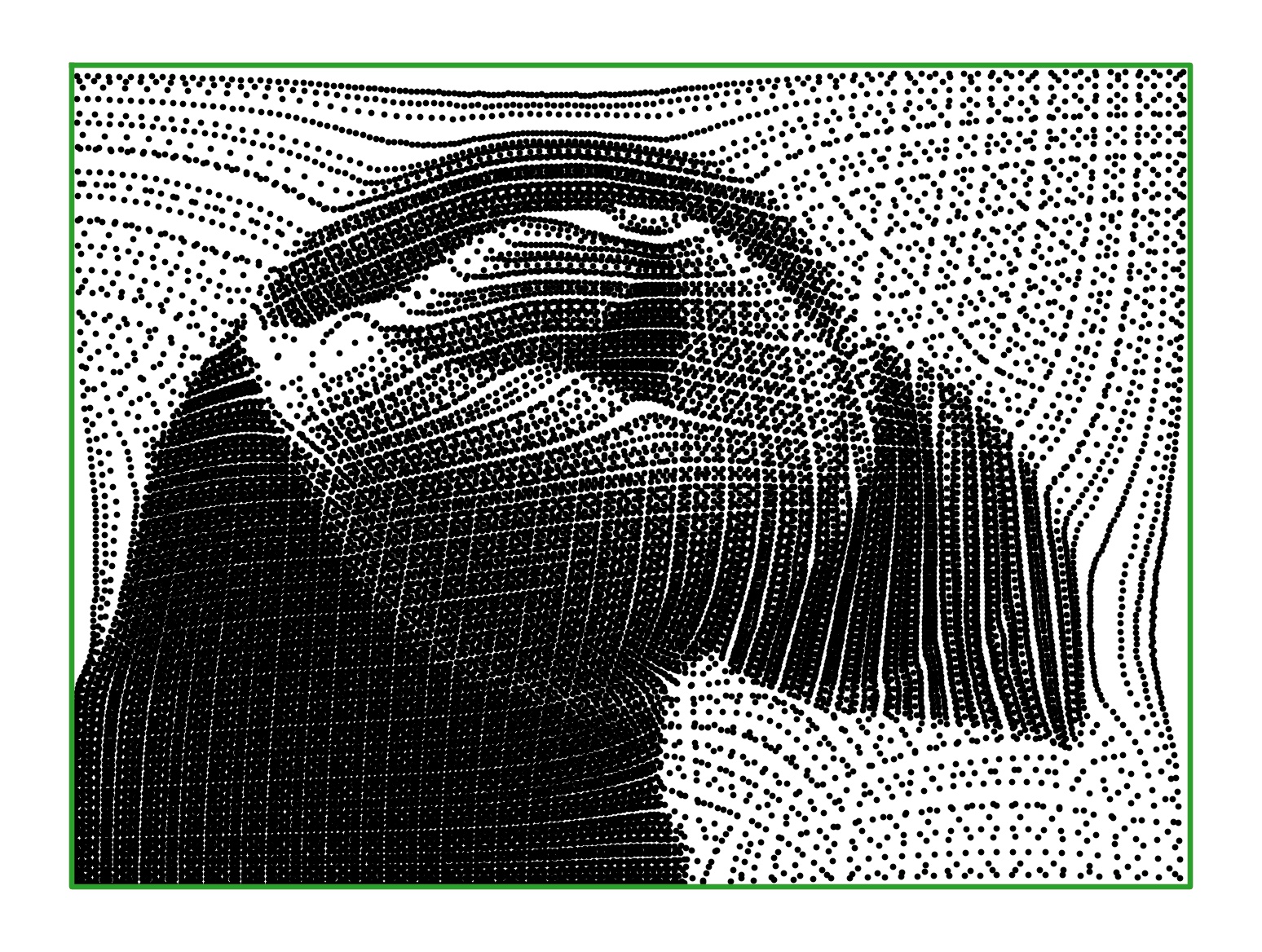}}}
	{{\includegraphics[width=2.5cm]{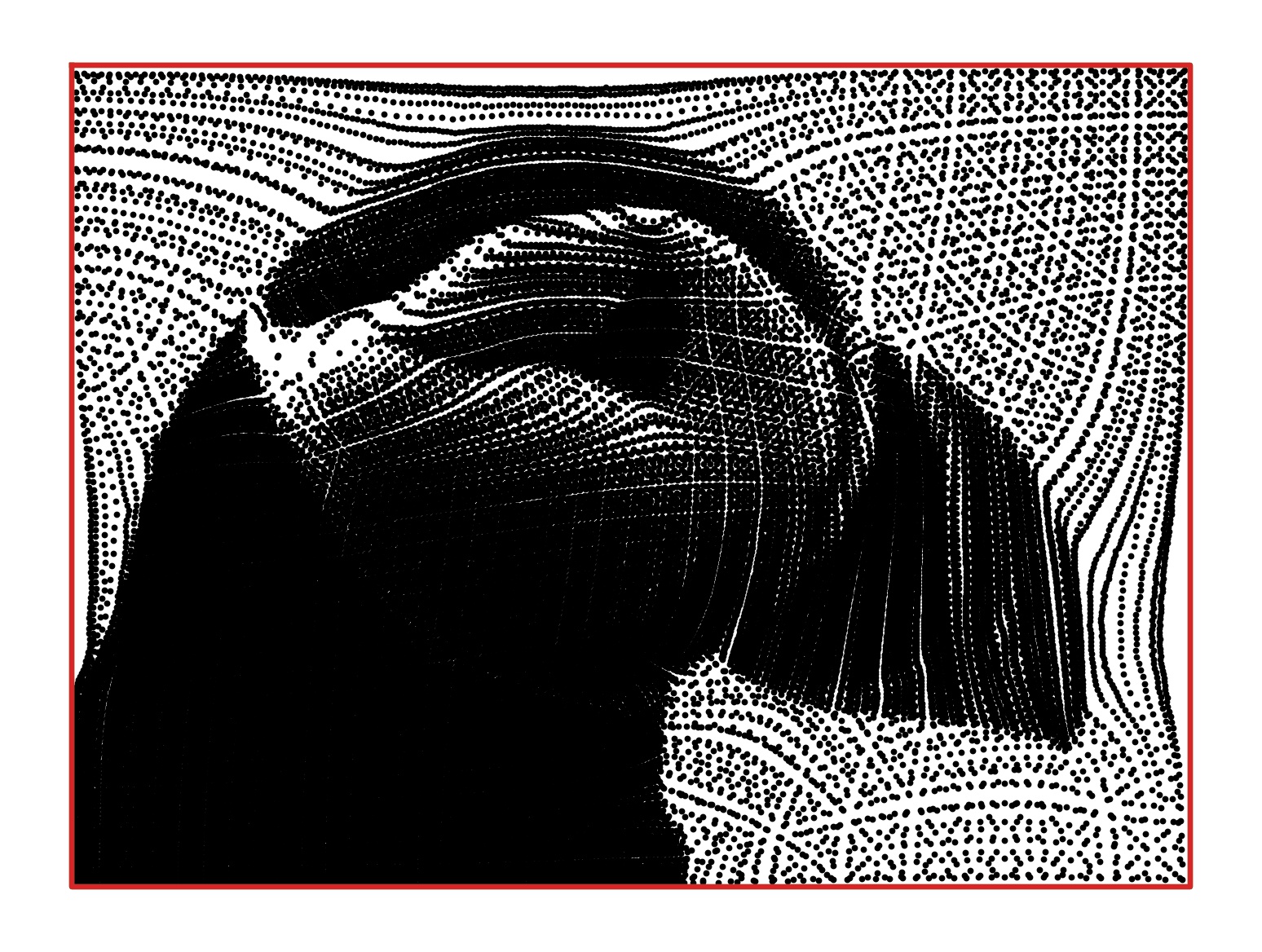}}}
        \end{minipage}
        \hfill\vline\hfill
        \begin{minipage}{.28\textwidth}
	  \includegraphics[width=\textwidth]{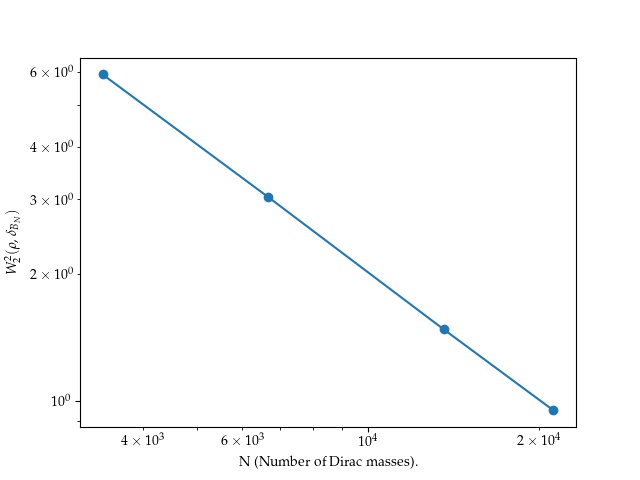}
        \end{minipage}
        \hfill
	\caption{\label{fig:Picture_Lloyd} Optimal quantization of a
          density $\rho$ corresponding to a gray-scale image
          (Wikimedia Commons, CC BY-SA 3.0). (Left) We display the
          point clouds obtained after one step of Lloyd's algorithm,
          starting from a regular grid of size $N
          \in\{3750,7350,15000,43350\}$.  (Right) Quantization error
          $W_2^2\left(\rho,\delta_{B_N}\right)$ as a function of N the
          number of points, showing that
          $W_2^2\left(\rho,\delta_{B_N}\right)\simeq N^{-1.00}$.}
\end{figure}

\section{Numerical results}
\label{sec:num}
In this section, we
report some experimental results in dimension $d=2$.


\paragraph{Gray-scale image}

As we mentioned in the introduction, uniform optimal quantization
allows to sparsely represent a (gray scale) image via points,
clustered more closely in areas where the image is darker
\cite{de2012blue,balzer2009capacity}. On figure
\ref{fig:Picture_Lloyd}, we ploted the point clouds obtained after a
single Lloyd step toward the density representing the image on the
left (Puffin), starting from regular grids. The observed rate of
convergence, $N^{-1.00}$, is coherent with the theoretical estimate
$\log(N)/N$ of \cref{rmk:rateminF}.


\paragraph{Gaussian density with small variance} 
\label{rmk:gaussian}
We  now consider a toy model where we approximate a gaussian density
truncated to the unit square $\Omega=[0,1]^2$,
$\rho(x,y)=\frac{1}{Z}e^{-8((x-\frac12)^2+(y-\frac 12)^2)}$ where $Z$
is a normalization constant.  On the left column of this figure, the
initial point clouds $Y_N^0$ are randomly distributed in  $[0,1]^2$. The three point clouds represented
above are obtained after one step of Lloyd's algorithm
\eqref{algo:lloyd2}. The red curve displays in a log-log scale the
mean values of $F_N(B_N(Y_N))$ over a hundred random point clouds, for
 $N \in \{400,961,1600,2500\}$. In this case, we observe a
decrease rate $N^{-0.95}$ with respect to the number of points,
similar to the case of the gray scale picture.
          
However, an interesting phenomena occurs when the initial point cloud
$Y_N^0$ is aligned on a axis-aligned grid. The pictures in the right
column of \cref{fig:gaussian} where computed starting from such a grid
with $N \in \{400,961,1600,2500\}$ points. As in the randomly
initialized case, we represented the values of $F_N(B_N(Y_N))$ in log-log
scale. The corresponding discrete probability measure
$\delta_{B_N(Y_N)}$ seems to converge to $\rho$ as $N\to\infty$, but
with a much worse rate for these "low" values of $N$: $F_N(B_N(Y_N))
\simeq N^{-0.8}$.  In this specific setting, with a separable density
and an axis-aligned grid $Y_0$, the power cells are rectangles and a
single Lloyd step brings us to a critical point of $F_N$. Thanks to
this remark, it is possible to estimate the approximation error from the
one-dimensional case. In fact, Appendix \ref{app:gaussian_behaviour}
shows that for any $\delta\in(0,1)$, there exists variances $\sigma_N
= \sigma_N(\delta)$ such that the approximation error
$W_2^2(\rho_{\sigma_N},\delta_{B_N})$ is of order $N^{-\frac{2-\delta}{2}}$. On
the other hand, for a fixed $\sigma$, the approximation error
is of order $N^{-1}$, to be compared with the
bound $\log(N)/N$ for general measures.


\section{Discussion}\label{sec:discussion}
We have studied the problem of minimizing the Wasserstein distance
between a fixed probability measure $\rho$ and a uniform measure over
$N$ points $\delta_Y$, parametrized by the position of the points $Y =
(y_1,\hdots,y_N)$. The main difficulty is the nonconvexity of the
Wasserstein distance $F_N: Y\in(\Rsp^d)^N
\mapsto\frac{1}{2}\Wass_2^2(\rho,\delta_Y)$, which we tackled by
means of a modified Polyak-\L{}ojaciewicz inequality \eqref{eq:PL}. One
limitation of our work is that the terms replacing $\min F_N$ in the
Polyak-\L{}ojaciewicz inequality \eqref{eq:PL} does not match the
theoretical bounds recalled in \cref{rmk:rateminF}. Future work will
concentrate on bridging that gap, but also on deriving consequences
for the algorithmic resolution of Wasserstein regression problems
$\min_{\theta} \Wass_2^2(\rho,T_{\theta\#}\mu),$ starting with the
case where $\theta \mapsto T_\theta$ is linear.

\begin{ack} This work was supported by a grant from the French ANR (MAGA, ANR-16-CE40-0014).
\end{ack}

\clearpage
\bibliographystyle{plain}
\bibliography{optquant}

\clearpage
\appendix
\section{Proof of \cref{prop:cvlloyd}}
Given $Y = (y_1,\hdots,y_N)\in(\Rsp^d)^N \setminus \Diag_N$, one has for any $i\in\{1,\hdots,N\}$,
$$  \begin{aligned}
  \int_{P_i(Y)} \norm{x-y_i}^2 \dd \rho(x)
  &= \int_{P_i(Y)} \norm{x-b_i(Y) + b_i(Y)  - y_i}^2 \dd\rho(x)\\
  &=     \int_{P_i(Y)} \norm{x - b_i(Y)}^2 \dd\rho(x) + \frac{1}{N} \nr{b_i(Y) - y_i}^2.
\end{aligned}
$$
Summing these equalities over $i$ and remarking that the map $T_Y$ defined  by $\left.T_Y\right|_{P_i(Y)} = y_i$ is an optimal transport map between $\rho$ and $\delta_Y$, we get
$$\begin{aligned}\frac1N \nr{B_N(Y) - Y}^2 &=  \Wass_2^2(\rho, y_i) -  \sum_i
  \int_{P_i(Y)} \norm{x - b_i(Y)}^2 \dd\rho(x) \\
  &\leq \Wass_2^2(\rho,\delta_Y) - \Wass_2^2(\rho,\delta_{B_N(Y)}).\end{aligned}$$
Thus, with $Y^{k+1} = B_N(Y^k)$, we have
$$ N\nr{\nabla F_N(Y^k)}^2 =\frac1N \nr{Y^{k+1} - Y^k}^2 \leq 2(F_N(Y^k) - F_N(Y^{k+1})).$$
This implies that the values of $ F_N(Y^k) $ are decreasing in $k$ and, since they are bounded from below, that $\nr{\nabla F_N(Y^k)}\to 0$ since $\sum_k  \nr{\nabla F_N(Y^k)}^2<+\infty$. The sequence $(Y^k)_k$ can be easily seen to be bounded, since $F_N(Y^k)$ is bounded, which implies a bound on the second moment of $\delta_{Y^k}$. 

For fixed $N$, since all atoms of $\delta_{Y^k}$ have mass $1/N$, this implies that all points $y_i^k$ belong to a same fixed compact ball. If $\rho$ itself is compactly supported, we can also prove that all points $Y^{k+1}=B_N(Y^k)$ are contained in a compact subset of $(\mathbb R^d)^N \setminus \Diag_N$, which means obtaining a lower bound on the distances $|b_i(Y)-b_j(Y)|$ for arbitrary $Y$. This lower bound can be obtained in the following way: since $\rho$ is absolutely continuous it is uniformly integrable which means that for every $\varepsilon>0$ there is $\delta=\delta(\varepsilon)>0$ such that for any set $A$ with Lebesgue measure $|A|<\delta$ we have $\rho(A)<\varepsilon$. We claim that we have $|b_i(Y)-b_j(Y)|\geq r:=(2R)^{1-d}\delta(\frac{1}{2N})$, where $R$ is such that $\rho$ is supported in a ball $B_R$ of radius $R$. Indeed, it is enough to prove that every barycenter $b_i(Y)$ is at distance at least $r/2$ from each face of the convex polytope $P_i(Y)$. Consider a face of such a polytope and suppose, by simplicity, that it lies on the hyperplane $\{x_d=0\}$ with the cell contained in $\{x_d\geq 0\}$. Let $s$ be such that $\rho(P_i(Y)\cap \{x_d> s\})=\rho(P_i(Y)\cap \{x_d< s\})=\frac{1}{2N}$. Then since the diameter of $P_i(Y)\cap B_R$ is smaller than $2R$, the Lebesgue measure of $P_i(Y)\cap \{x_d< s\}$ is bounded by $(2R)^{d-1}s$, which provides $s\geq r$ because of the definition of $r$. Since at least half of the mass (according to $\rho$) of the cell $P_i(Y)$ is above the level $x_d=s$ the $x_d$-coordinate of the barycenter is at least $r/2$. This shows that the barycenter lies at distance at least $r/2$ from each of its faces.

As a consequence, the iterations $Y^k$ of the Lloyd algorithm lie in a compact subset of $(\mathbb R^d)^N \setminus \Diag_N$, on which $F_N$ is $C^1$. This implies that any limit point must be a critical point. 

We do not discuss here whether the whole sequence converges or not, which seems to be a delicate matter even for fixed $N$. It is anyway possible to prove (but we do not develop the details here) that the set of limit points is a closed connected subet of $(\mathbb R^d)^N$ with empty interior, composed of critical points of $F_N$ all lying on a same level set of $F_N$.

\section{Proof of \cref{coro:proba}} \label{app:proba}
Given $Y = (y_1,\hdots,y_N) \in (\Rsp^d)^N$, we denote
$$ I_\eps(Y) = \{ i\in \{1,\hdots, N\} \mid \forall j\neq i, \|y_i -
y_j\|\geq \eps \}.$$ We call points $y_i$ such that $i\in I_\eps(Y)$
$\eps$-isolated, and points $y_i$ such that $i\not\in I_\eps(Y)$
$\eps$-connected.

\begin{lem} \label{lem:Diarmid}
 Let $X_1,\hdots,X_N$ be independent, $\Rsp^d$-valued, random
 variables. Then, there is a constant $C_d>0$ such that
 $$ \Prob(\{ \abs{\kappa(X_1,\hdots,X_N) - \Exp(\kappa)}\geq \eta \})  \leq \ee^{-N \eta^2/C_d}.$$
\end{lem}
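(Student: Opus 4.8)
The plan is to read \cref{lem:Diarmid} as a direct application of McDiarmid's bounded differences inequality to the (normalized) counting function $\kappa=\kappa_\eps$ recording how many of the points $X_1,\dots,X_N$ are $\eps$-isolated — equivalently, how many are $\eps$-connected, since the two differ by a constant and have the same fluctuations. The only nontrivial ingredient is the \emph{bounded differences} property: I claim there is a constant $c_d>0$, depending only on the dimension $d$, such that replacing a single argument $X_i$ by an arbitrary $X_i'$ (leaving the others unchanged) alters $\kappa$ by at most $c_d/N$. Granting this, the bounded differences inequality gives, for every $\eta>0$,
\[
  \Prob\bigl(|\kappa(X_1,\dots,X_N)-\Exp(\kappa)|\geq\eta\bigr)
  \;\leq\; 2\exp\!\Bigl(-\frac{2\eta^2}{\sum_{i=1}^N (c_d/N)^2}\Bigr)
  \;=\; 2\exp\!\Bigl(-\frac{2N\eta^2}{c_d^2}\Bigr),
\]
and the harmless factor $2$ (from applying the one-sided estimate to $\kappa$ and to $-\kappa$) is absorbed by slightly enlarging the constant, which yields the statement with $C_d$ of order $c_d^2$.

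So the whole proof reduces to the bounded differences estimate, which is a packing argument in $\Rsp^d$. Fix $i$ and pass from $X=(X_1,\dots,X_N)$ to $X'=(X_1,\dots,X_i',\dots,X_N)$. The isolation status of index $i$ itself may flip, accounting for at most $1/N$. For an index $j\neq i$, membership of $j$ in $I_\eps$ depends only on the distances $\nr{X_j-X_k}$ with $k\neq j$, among which only the term $k=i$ changes; hence $j$ can switch status in exactly two ways. Either $j$ was $\eps$-isolated in $X$ and now $\nr{X_j-X_i'}<\eps$, so $j$ becomes $\eps$-connected: all such $j$ are $\eps$-isolated, hence pairwise at distance at least $\eps$, and they all lie in $\B(X_i',\eps)$, so the disjoint balls $\B(X_j,\eps/2)$ fit inside $\B(X_i',\tfrac{3\eps}{2})$ and a volume comparison bounds their number by $3^d$. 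Or, symmetrically, $j$ was $\eps$-connected in $X$ \emph{solely} because $\nr{X_j-X_i}<\eps$ — that is, $\nr{X_j-X_k}\geq\eps$ for every $k\neq i,j$ — and now $\nr{X_j-X_i'}\geq\eps$, so $j$ becomes $\eps$-isolated: these $j$ lie in $\B(X_i,\eps)$ and are pairwise at distance at least $\eps$, so again there are at most $3^d$ of them. Summing, at most $1+2\cdot 3^d$ indices change status and each contributes $1/N$ to the change of $\kappa$, so one may take $c_d = 1+2\cdot 3^d$.

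I expect the only point requiring care to be the classification of which indices can change status — in particular the observation that moving $X_i$ cannot turn an $\eps$-connected point $j$ into an $\eps$-isolated one unless $X_i$ was the \emph{unique} obstruction to $j$'s isolation (if some other $X_{k_0}$, $k_0\neq i$, is within $\eps$ of $X_j$, it still is after the move), together with the elementary but essential fact that any family of points pairwise at distance $\geq\eps$ and all contained in a ball of radius $O(\eps)$ has cardinality bounded by a constant depending only on $d$. Once this bookkeeping is done, the bounded differences inequality is entirely standard and the constant tracking in the displayed estimate is routine.
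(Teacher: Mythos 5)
Your proposal is correct and follows essentially the same route as the paper's proof: a bounded-differences (packing) estimate showing that changing one point alters $\kappa$ by at most $C_d/N$, followed by McDiarmid's inequality. Your bookkeeping of which indices can flip status, and your explicit volume-comparison bound $3^d$, are just a more detailed version of the paper's count of at most $2c_d+1$ affected indices.
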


\begin{proof}
This lemma is a consequence of McDiarmid's inequality. To apply this
inequality, we need evaluate the amplitude of variation of the
function $\kappa$ along changes of one of the points $x_i$. Denote
$c_d$ the maximum cardinal of a subset $S$ of the ball $B(0,\eps)$
such that the distance between any distinct points in $S$ is at least
$\eps$. By a scaling argument, one can check that $c_{d}$ does not, in
fact, depend on $\eps$. To evaluate
$$ \abs{\kappa(x_1,\hdots,x_i,\hdots,x_N) -
  \kappa(x_1,\hdots,\tilde{x}_i,\hdots,x_N)}, $$ we first note that at
most $c_d$ points may become $\eps$-isolated when removing $x_i$. To
prove this, we remark that if a point $x_j$ becomes $\eps$-isolated
when $x_i$ is removed, this means that $\|{x_i - x_j}\| \leq \eps$ and
$\|{x_j - x_k}\| > \eps$ for all $k\not\in\{i,j\}$.  The number of
such $j$ is bounded by $c_d$. Symmetrically, there may be at most
$c_d$ points becoming $\eps$-connected under addition of
$\hat{x}_i$. Finally, the point $x_i$ itself may change status from
$\eps$-isolated to $\eps$-connected. To summarize, we obtain that with
$C_d = 2c_d + 1$,
  $$\abs{\kappa(x_1,\hdots,x_i,\hdots,x_N) -
    \kappa(x_1,\hdots,\tilde{x}_i,\hdots,x_N)} \leq \frac{1}{N} C_d.$$
The conclusion then directly follows from McDiarmid's inequality.
\end{proof}

\begin{lem}
	\label{lem:exp_kappa}
  Let $\sigma \in \LL^\infty(\Rsp^d)$ be a probability density and let
  $X_1,\hdots,X_N$ be i.i.d. random variables with distribution
  $\sigma$. Then,
  $$ \Exp(\kappa(X_1,\hdots,X_N)) \geq (1 - \|{\sigma}\|_{\LL^\infty} \omega_d \epsilon^d)^{N-1}.$$
\end{lem}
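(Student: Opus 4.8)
The plan is to evaluate $\Exp(\kappa)$ \emph{exactly} by a symmetry-and-conditioning argument, and then bound the result from below using nothing but $\|\sigma\|_{\LL^\infty}$. Here $\kappa$ denotes the fraction of $\eps$-isolated points, $\kappa(X) = \frac1N\Card(I_\eps(X)) = \frac1N\sum_{i=1}^N \mathbf{1}_{\{i\in I_\eps(X)\}}$. Since $X_1,\hdots,X_N$ are i.i.d., hence exchangeable, linearity of expectation gives
$$ \Exp(\kappa(X_1,\hdots,X_N)) = \Prob\bigl(1\in I_\eps(X)\bigr) = \Prob\bigl(\forall j\neq 1,\ \nr{X_1-X_j}\geq \eps\bigr). $$

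Next I would condition on the value of $X_1$. As $X_2,\hdots,X_N$ are mutually independent, independent of $X_1$, and each distributed according to $\sigma$, for every $x$,
$$ \Prob\bigl(\forall j\neq 1,\ \nr{X_1-X_j}\geq \eps \mid X_1 = x\bigr) = \prod_{j=2}^N \Prob\bigl(\nr{x-X_j}\geq \eps\bigr) = \bigl(1 - \sigma(\B(x,\eps))\bigr)^{N-1}. $$
The key elementary estimate is then $\sigma(\B(x,\eps)) = \int_{\B(x,\eps)}\sigma(y)\dd y \leq \|\sigma\|_{\LL^\infty}\,\abs{\B(x,\eps)} = \|\sigma\|_{\LL^\infty}\,\omega_d\,\eps^d$, and the point to watch is that this bound is \emph{uniform in} $x$, because $\abs{\B(x,\eps)} = \omega_d\eps^d$ does not depend on the centre.

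Finally, in the regime of interest $\|\sigma\|_{\LL^\infty}\omega_d\eps^d \leq 1$ (if this fails the asserted inequality is either trivial, its left-hand side being nonnegative, or uninteresting), the map $t\mapsto (1-t)^{N-1}$ is nonincreasing on $[0,1]$, so $\bigl(1-\sigma(\B(x,\eps))\bigr)^{N-1}\geq \bigl(1-\|\sigma\|_{\LL^\infty}\omega_d\eps^d\bigr)^{N-1}$ for every $x$; integrating this uniform lower bound against the law of $X_1$ gives exactly the claim. There is essentially no obstacle in this argument --- it is one line of conditioning plus the volume bound --- and the only thing to keep in mind is that the estimate is informative only in the small-$\eps$ regime, which is precisely the one used when this lemma is combined with \cref{lem:Diarmid} to prove \cref{coro:proba}. (A slightly different route, via Jensen's inequality applied to the convex function $u\mapsto(1-u)^{N-1}$ on $[0,1]$ together with $\Exp[\sigma(\B(X_1,\eps))]\leq \|\sigma\|_{\LL^\infty}\int\sigma(\B(x,\eps))\dd x = \|\sigma\|_{\LL^\infty}\omega_d\eps^d$, would also work, but the conditioning argument is the most transparent.)
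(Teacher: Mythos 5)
Your proof is correct and takes essentially the same route as the paper's: reduce by symmetry/linearity to the probability that a single point is $\eps$-isolated, then bound the chance that another point lands nearby by $\sigma(\B(x,\eps))\leq\|\sigma\|_{\LL^\infty}\omega_d\eps^d$ and use independence to raise this to the power $N-1$. Your write-up is a bit more careful about the conditioning on $X_1$ and the monotonicity step than the paper's terse version, but the underlying argument is identical.
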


\begin{proof}
  The probability that a point $X_i$ belongs to the ball $B(X_j,\eps)$
  for some $j\neq i$ can be bounded from above by $\sigma(B(X_j,\eps))
  \leq \|{\sigma}\|_{\LL^\infty} \omega_d \epsilon^d$, where $\omega_d$
  is the volume of the $d$-dimensional unit ball. Thus, the
  probability that $X_i$ is $\eps$-isolated is larger than
  $$ (1 - \|{\sigma}\|_{\LL^\infty} \omega_d \epsilon^d)^{N-1}. $$
  We conclude by noting that
  \begin{equation*}
    \Exp(\kappa(X_1,\hdots,X_N)) = \frac{1}{N} \sum_{1\leq i\leq N} \Prob(X_i \hbox{ is $\eps$-isolated}).
  \qedhere
  \end{equation*}
\end{proof}

\begin{proof}[Proof of \cref{coro:proba}]
	We apply the previous \cref{lem:exp_kappa} with $\epsilon_N = N^{-\frac{1}{\beta}}$ and $\beta=d-\frac{1}{2}$.
	The expectation of $\kappa(X_1,\dots,X_N)$ is lower bounded by:	\begin{equation*}
		\begin{split}
			\Exp(\kappa(X_1,\dots,X_N))\geq&\left(1 - N^{-\frac{d}{\beta}}\|{\sigma}\|_{\LL^\infty} \omega_d\right)^{N-1}\\
			\geq&1-CN^{1-\frac{d}{\beta}}
		\end{split}
	\end{equation*} 
	for large $N$, since $\beta<d$.
	By \cref{lem:Diarmid}, for any $\eta>0$,
	$$ \Prob(\kappa(X_1,\hdots,X_N)\geq1-CN^{1-\frac{d}{\beta}}-\eta )  \geq 1-e^{-K N\eta^2},$$
	for constants $C,K>0$ depending only on $\|{\sigma}\|_{\LL^\infty}$ and $d$. We choose $\eta=N^{-\frac{1}{2d-1}}$, so that $\eta$ is of the same order as $N^{1-\frac{d}{\beta}}$ since $1-\frac{d}{\beta}=-\frac{1}{2d-1}$.Thus, for a slightly different $C$,
	$$ \Prob(\kappa(X_1,\hdots,X_N)\geq 1-C\eta )  \geq 1-\ee^{-K N\eta^2}.$$
	Now, for $\omega_1,\dots,\omega_N$ such that $$\kappa(X_1(\omega_1),\hdots,X_N(\omega_N))\geq 1-C\eta,$$
	\cref{thm:approx_bary} yields:
	\begin{equation*}
		W_2^2\left(\delta_{B_N(X(\omega))},\rho\right)\lesssim\frac{N^{\frac{d-1}{\beta}}}{N}+\eta
		\lesssim N^{-\frac{1}{2d-1}}
	\end{equation*}

	and such a disposition happens with probability at least
	\begin{equation*}
		1-\ee^{-KN\eta^2}=1-\ee^{-KN^{\frac{2d-3}{2d-1}}}. \qedhere
	\end{equation*}
\end{proof}

\section{Proof of \cref{cor:PL}}
We first note that by Proposition~\ref{prop:gradF}, we have
$\nr{\nabla F_N(Y)}^2 =\frac{1}{N^2} \norm{B_N(Y)-Y}^2 $. We then use $\Wass^2_2(\delta_{B_N(Y)},\delta_Y)\leq \frac{1}{N} \norm{B_N(Y)-Y}^2 $ and 
$$
\begin{aligned}
  \Wass_2^2(\rho,\delta_Y) &\leq 2\big(\Wass_2^2(\rho,\delta_{B_N(Y)}) + 2N\nr{\nabla F_N(Y)}^2 .\end{aligned}$$
Thus, using \cref{thm:approx_bary} to bound
$\Wass_2^2(\rho,\delta_{B_N(Y)})$ from above, we get the desired
result.

\section{Proof of \cref{thm:gf}}
\label{app:gf}

\begin{lem} 
  \label{lem:stretch}
  Let $Y^0\in(\Rsp^d)^N\setminus \Diag_{N,\eps_N}$ for some $\eps_N>0$. Then, the iterates
  $(Y^k)_{k\geq 0}$ of \eqref{eq:discr_flow} satisfy for every $k\geq
  0$, and for every $i\neq j$
	\begin{equation}
		\label{eq:reverse_gronw}
		\norm{y_i^k-y_j^k}\geq(1-\tau_N)^{k}\epsilon_N
	\end{equation}
\end{lem}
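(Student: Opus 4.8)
The plan is to reduce the claim to the single-step estimate \eqref{ineq:res_gw} and then iterate. Indeed, if one step of \eqref{eq:discr_flow} satisfies $\nr{y_i^{k+1}-y_j^{k+1}}\ge(1-\tau_N)\nr{y_i^k-y_j^k}$ for every $i\ne j$, a straightforward induction starting from $\nr{y_i^0-y_j^0}\ge\eps_N$ (which holds since $Y^0\notin\Diag_{N,\eps_N}$) gives \eqref{eq:reverse_gronw}; as a by-product the induction also shows $Y^k\notin\Diag_N$ at every step, so that $B_N(Y^k)$, the next iterate, and $\nabla F_N(Y^k)$ all remain well defined.

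For the single-step estimate I would start from the identity $y_i^{k+1}-y_j^{k+1}=(1-\tau_N)(y_i^k-y_j^k)+\tau_N\bigl(b_i(Y^k)-b_j(Y^k)\bigr)$, which is just \eqref{eq:discr_flow} read coordinate by coordinate. Expanding $\nr{y_i^{k+1}-y_j^{k+1}}^2$, the cross term equals $2\tau_N(1-\tau_N)\langle y_i^k-y_j^k,\,b_i(Y^k)-b_j(Y^k)\rangle$ and the remaining term $\tau_N^2\nr{b_i(Y^k)-b_j(Y^k)}^2$ is nonnegative; since $\tau_N\in(0,1)$, the desired inequality follows at once, provided one knows the monotonicity inequality $\langle y_i-y_j,\,b_i(Y)-b_j(Y)\rangle\ge0$ for the barycenters of the power cells.

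This monotonicity inequality is the heart of the argument and the only non-routine step, where I expect the real work to lie. I would deduce it directly from the defining inequalities of the power cells: for any $x\in P_i(Y)$ one has $\nr{x-y_i}^2-\phi_i\le\nr{x-y_j}^2-\phi_j$, and expanding the squares and cancelling $\nr{x}^2$ turns this into the affine inequality $\langle x,\,y_i-y_j\rangle\ge c_{ij}$ with $c_{ij}=\frac12\bigl(\nr{y_i}^2-\nr{y_j}^2-\phi_i+\phi_j\bigr)$; by symmetry, every $x\in P_j(Y)$ satisfies $\langle x,\,y_i-y_j\rangle\le c_{ij}$. Integrating against $\rho$ over $P_i(Y)$ and over $P_j(Y)$ respectively, and using $\rho(P_i(Y))=\rho(P_j(Y))=\frac1N$ together with $b_i(Y)=N\int_{P_i(Y)}x\,\dd\rho(x)$, this yields $\langle b_i(Y),\,y_i-y_j\rangle\ge c_{ij}\ge\langle b_j(Y),\,y_i-y_j\rangle$; subtracting gives the claim. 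Geometrically it merely records that the hyperplane orthogonal to $y_i-y_j$ separating $P_i(Y)$ from $P_j(Y)$ also separates their $\rho$-barycenters, which is precisely why no step of the Lloyd/gradient iteration can bring two well-separated atoms closer than by the factor $1-\tau_N$. Everything past this point — the expansion of the square and the induction on $k$ — is routine.
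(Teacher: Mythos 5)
Your proposal is correct and follows essentially the same route as the paper: the heart of both arguments is the monotonicity inequality $\langle y_i-y_j,\,b_i(Y)-b_j(Y)\rangle\ge 0$, which the paper obtains by evaluating the power-cell defining inequalities at the barycenters and you obtain by integrating the same affine inequalities over the cells — an equivalent computation. The only cosmetic difference is that you expand $\nr{y_i^{k+1}-y_j^{k+1}}^2$ and drop the nonnegative terms, whereas the paper uses the convexity (subgradient) inequality for the norm; both yield the one-step contraction factor $1-\tau_N$, and the induction is then routine.
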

\begin{proof}
	We consider the distance between two trajectories after $k$ iterations: $e_k=\norm{y_i^k-y_j^k}.$ Assuming that $e_k>0$, the convexity of the norm immediately gives us:
	\begin{align*}
		e_{k+1}-e_k \geq& \sca{\frac{y_i^k-y_j^k}{\norm{y_i^k-y_j^k}}}{y_i^{k+1}-y_j^{k+1}-\left(y_i^{k}-y_j^{k}\right)}\\ 
		=&
		\tau_N\sca{\frac{y_i^k-y_j^k}{\norm{y_i^k-y_j^k}}}{b_i^k-b_j^k}
		-\tau_N\norm{y_i^k-y_j^k}
	\end{align*}
	where we denoted $b_i^k:=b_i(Y_N^k)$ the barycenter of the $i$th
	Power cell $P_i(Y_N^k)$ in the tesselation associated with the point cloud
	$Y_N^k$.  Since each barycenter $b_i^k$ lies in its
	corresponding Power cell, the scalar product $\sca{y_i^k-y_j^k}{b_i^k-b_j^k}$ is non-negative:
	Indeed, for any $i\neq j$,
	\begin{equation*}
		\norm{y_i^k - b_i^k}^2 - \norm{y_j^k - b_i^k}^2 \leq \phi_i^k-\phi_j^k
	\end{equation*}

	Summing this inequality with the same inequality with the roles of $i$ and $j$ reversed, we obtain:
	$$\sca{y_i^k-y_j^k}{b_i^k-b_j^k}\geq 0$$
	thus giving us the geometric inequality $e_{k+1}\geq (1-\tau_N)e_k$. Since $Y_N^0$ was chosen in  $\Omega^N\setminus\Diag_{N,\eps_N}$, this yields $e_k\geq (1-\tau_N)^k e_0$ and inequality \ref{eq:reverse_gronw}.
\end{proof}

\begin{lem}
	For any $k\geq0$
	\label{lem:gf}
	\begin{equation}
		\label{ineq:prec_gf}
		F_N(Y_N^k)\leq F_N(Y_N^0)\eta_N^k+2C_{d,\Omega}(1-\eta_N)\frac{\epsilon_N^{1-d}}{N}\frac{A_N^k-\eta_N^k}{A_N-\eta_N},
	\end{equation}
	where we denote  $\eta_N=1-\frac{\tau_N}{2}(2-\tau_N)$ and $A_N=(1-\tau_N)^{1-d}$.
\end{lem}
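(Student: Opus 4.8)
The plan is to establish a one-step recursion of the form
$$F_N(Y_N^{k+1})\;\le\;\eta_N\,F_N(Y_N^k)\;+\;2C_{d,\Omega}(1-\eta_N)\,\frac{\epsilon_N^{1-d}}{N}\,A_N^{k},$$
and then to unroll it. For the one-step estimate I would exploit that $F_N$ is $\frac1N$-semiconcave (Proposition~\ref{prop:gradF}): the function $g=F_N-\frac{1}{2N}\nr{\cdot}^2$ is concave and of class $\mathcal{C}^1$ off $\Diag_N$, so the ``descent lemma'' $F_N(Z)\le F_N(Y)+\sca{\nabla F_N(Y)}{Z-Y}+\frac{1}{2N}\nr{Z-Y}^2$ holds for every $Y\in(\Rsp^d)^N\setminus\Diag_N$. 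Applying it with $Y=Y_N^k$ and $Z=Y_N^{k+1}=Y_N^k-\tau_N N\nabla F_N(Y_N^k)$ and expanding the square gives
$$F_N(Y_N^{k+1})\;\le\;F_N(Y_N^k)-\tau_N\Bigl(1-\frac{\tau_N}{2}\Bigr)N\nr{\nabla F_N(Y_N^k)}^2\;=\;F_N(Y_N^k)-(1-\eta_N)\,N\nr{\nabla F_N(Y_N^k)}^2,$$
using $1-\eta_N=\frac{\tau_N}{2}(2-\tau_N)$. The same inequality can alternatively be read off from the transport plan sending $P_i(Y_N^k)$ onto $y_i^{k+1}$, together with the variance identity $\int_{P_i}\nr{x-y_i^{k+1}}^2\dd\rho=\int_{P_i}\nr{x-b_i^k}^2\dd\rho+\frac1N\nr{b_i^k-y_i^{k+1}}^2$ already used in the proof of Proposition~\ref{prop:cvlloyd}.

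Next I would feed in the Polyak-\L ojasiewicz-type bound. By Lemma~\ref{lem:stretch} every pair of distinct points of $Y_N^k$ is at distance at least $(1-\tau_N)^k\epsilon_N$, so $I_{(1-\tau_N)^k\epsilon_N}(Y_N^k)=\{1,\dots,N\}$ and Corollary~\ref{cor:PL} (equivalently Theorem~\ref{thm:approx_bary} with a full index set) applies with $\epsilon=(1-\tau_N)^k\epsilon_N$, yielding
$$N\nr{\nabla F_N(Y_N^k)}^2\;\ge\;F_N(Y_N^k)-C_{d,\Omega}\,\frac{\bigl((1-\tau_N)^k\epsilon_N\bigr)^{1-d}}{N}.$$
Substituting this into the descent inequality and using $\bigl((1-\tau_N)^k\epsilon_N\bigr)^{1-d}=(1-\tau_N)^{k(1-d)}\epsilon_N^{1-d}=A_N^{k}\,\epsilon_N^{1-d}$ (recall $A_N=(1-\tau_N)^{1-d}$) produces the displayed one-step recursion; the constant $2C_{d,\Omega}$ comfortably dominates the one that genuinely comes out.

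It then remains to iterate. With $a_k=F_N(Y_N^k)$ and $b=2C_{d,\Omega}(1-\eta_N)\epsilon_N^{1-d}/N$, the recursion reads $a_{k+1}\le\eta_N a_k+bA_N^k$, hence by induction $a_k\le\eta_N^{k}a_0+b\sum_{j=0}^{k-1}\eta_N^{k-1-j}A_N^{j}$. Since $A_N=(1-\tau_N)^{1-d}>1>\eta_N$ for $d\ge 2$ and $\tau_N\in(0,1)$, the two ratios differ and the geometric sum closes as $\sum_{j=0}^{k-1}\eta_N^{k-1-j}A_N^{j}=\frac{A_N^{k}-\eta_N^{k}}{A_N-\eta_N}$, which is exactly inequality \eqref{ineq:prec_gf}.

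I expect the main obstacle to be the one-step estimate with the correct coefficients: it is crucial that the error term $\epsilon_N^{1-d}/N$ appears with a prefactor of order $1-\eta_N=O(\tau_N)$ rather than $O(1)$, as otherwise the iterated bound would grow uncontrollably. A bare triangle inequality $\Wass_2(\rho,\delta_{Y_N^{k+1}})\le\Wass_2(\rho,\delta_{B_N(Y_N^k)})+\Wass_2(\delta_{B_N(Y_N^k)},\delta_{Y_N^{k+1}})$ does not deliver this, so one genuinely needs the semiconcavity of $F_N$ (the descent lemma) or the exact variance decomposition. Everything else --- tracking the geometrically shrinking separation via Lemma~\ref{lem:stretch} and summing a geometric series --- is routine.
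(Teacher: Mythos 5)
Your proposal is correct and follows essentially the same route as the paper: a one-step descent estimate from the $\frac1N$-semiconcavity of $F_N$, combined with the separation bound of Lemma~\ref{lem:stretch} and the barycenter approximation of Theorem~\ref{thm:approx_bary} (which you access through Corollary~\ref{cor:PL}, whose proof is exactly the triangle-inequality step the paper performs inline), followed by unrolling the resulting geometric recursion. The only difference is organizational --- the paper re-derives the Polyak--\L ojasiewicz-type bound inside the proof rather than citing the corollary --- and your constant bookkeeping is, if anything, cleaner than the paper's.
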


\begin{proof}
	This is obtained in a very similar fashion as \cref{lem:stretch}. For any $k\geq0$, the semi-concavity of $F_N$ yields the inequality:
	\begin{equation*}
		\begin{split}
			F_N(Y_N^{k+1})-\frac{\norm{Y_N^{k+1}}^2}{2N}-\left(F_N(Y_N^k)-\frac{\norm{Y_N^k}^2}{2N}\right)\leq& \sca{-\frac{\B_N^k}{N}}{Y_N^{k+1}-Y_N^k}
		\end{split}
	\end{equation*}
with $B_N^k:=B_N(Y_N^k)$ in accordance with the previous proof.

Rearranging the terms, 
	\begin{equation*}
		\begin{split}
			F_N(Y_N^{k+1})-F_N(Y_N^k)\leq&-\tau_N(1-\frac{\tau_N}{2})\frac{\norm{B_N^k-Y_N^k}^2}{N}\\
			=&-\tau_N(1-\frac{\tau_N}{2})\Wass_2^2(\delta_{B_N^k},\delta_{Y_N^k})\\
			\leq&\tau_N(1-\frac{\tau_N}{2})\left(-\frac{1}{2} \Wass_2^2(\delta_{Y_N^k},\rho)+ \Wass_2^2(\rho,\delta_{B_N^k})\right)
		\end{split}
	\end{equation*}
	by applying first the triangle inequality to $\Wass_2(\delta_{B_N^k},\delta_{Y_N^k})$ and then Cauchy-Schwartz's inequality. Using \cref{thm:approx_bary}, this yields:
	\begin{equation*}
		\begin{split}
			F_N(Y_N^{k+1})\leq & (1-\frac{\tau_N}{2}(2-\tau_N)) F_N(Y_N^k)+2C_{d,\Omega}\tau_N(2-\tau_N)\frac{\epsilon_N^{1-d}}{N}(1-\tau_N)^{k(1-d)}\\
			\leq & \eta_N F_N(Y_N^k)+2C_{d,\Omega}(1-\eta_N)\frac{\epsilon_N^{1-d}}{N}A_N^{k}.
		\end{split}
	\end{equation*}
	and we simply iterate on $k$ to end up with the bound claimed in  \cref{lem:gf}.
\end{proof}

\begin{proof}[Proof of \cref{thm:gf}]
	To conclude, we simply make (order 1) expansions of the terms in \ref{ineq:prec_gf}. The definition of $k_N$ in \cref{thm:gf}, although convoluted, was made so that both terms in the right-hand side of this inequality, $F_N(Y_N^0)\eta_N^{k_N}$ and $(1-\eta_N)\frac{\epsilon_N^{1-d}}{N}\frac{A_N^{k_N}-\eta_N^{k_N}}{A_N-\eta_N}$ have the same asymptotic decay to $0$ (as $N\to+\infty$):	With the notations of the previous proposition, we have for fixed $N$:
		\begin{equation}
			\label{eq:expl_bound}
			\Wass_2^2\left(\rho,\delta_{Y_N^{k_N}}\right)\leq \Wass_2^2\left(\rho,\delta_{Y_N^0}\right)\eta_N^{k_N}+2C_{d,\Omega}\frac{(1-\eta_N)}{A-\eta_N}\frac{A^{k_N}-\eta_N^{k_N}}{N\epsilon_N^{d-1}}
		\end{equation}
		
		We make use here of the notation from \cref{sec:gf}: 
		
		$$T_N=k_N\tau_N=\Partentf{\frac{1}{d}\ln(F_N(Y_N^0)N\epsilon_N^{d-1})}$$
		to clear this expression a bit, and, because of the assumption $\lim_{N\to\infty}\tau_N=0$, we may write:
		\begin{equation*}
			\frac{A^{k_N}-\eta^{k_N}}{N\eps_N^{d-1}}=\frac{\ee^{(d-1)T_N}}{N\eps_N^{d-1}}+o_{N\to\infty}\left(\frac{T_N}{(N\epsilon_N^{d-1})^{\frac{1}{d}}}\right)
		\end{equation*}
		as well as $\eta^{k_N}=\ee^{- T_N}+o_{N\to\infty}\left(\frac{T_N}{(N\epsilon_N^{d-1})^{\frac{1}{d}}}\right)$, and substituting $T_N$,
		\begin{equation*}
			\begin{split}
				\Wass_2^2\left(\rho,\delta_{Y_N^{k_N}}\right)\lesssim& \frac{\Wass_2^2\left(\rho,\delta_{Y_N^0}\right)^\frac{d-1}{d}} {\left(N\epsilon_N^{d-1}\right)^{\frac 1d}}+o_{N\to\infty}\left(\frac{T_N}{(N\epsilon_N^{d-1})^{\frac{1}{d}}}\right)\\
				\lesssim&\Wass_2^2\left(\rho,\delta_{Y_N^0}\right)^{1-\frac{1}{d}} N^{\frac{-1}{d^2}+\alpha\left(1-\frac 1d\right)}
				\qedhere
			\end{split}
		\end{equation*}
\end{proof}

\section{Case of a low variance Gaussian in \cref{rmk:gaussian}}
\label{app:gaussian_behaviour}
	Here, we consider $\rho_\sigma$ the probability measure
        obtained by truncating and renormalizing a centered normal
        distribution with variance $\sigma$ to the segment
        $[-1,1]$. We first show that for any $N\in\N$ and
        $\delta\in(0,1)$, we can find a small $\sigma_{N,\delta}$ such
        that the Wasserstein distance beween
        $\rho_{\sigma_{N,\delta}}$ and its best $N$-points
        approximation of is at least $CN^{-(2-\delta)}$.


    \begin{prop}
    	\label{prop:gaussian}
    	For any $\sigma>0$, consider $\rho_\sigma \eqdef m_\sigma \ee^{-\frac{\abs{x}^2}{2\sigma^2}} \mathds{1}_{[-1;1]} dx$ the truncated centered Gaussian density, where $m_\sigma$ is taken so that $\rho_\sigma$ has unit mass. Then, for every $\delta\in(0,1)$, there exists a constant $C>0$ and a sequence of variances $(\sigma_N)_{N\in\N}$ such that
    	$$ \forall Y\in(\R^d)^N\setminus\Diag_N,\quad \Wass_2^2\left(\delta_{B_N(Y)},\rho_{\sigma_N}\right)\geq CN^{-(2-\delta)}$$
    \end{prop}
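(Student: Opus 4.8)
The plan is to reduce to dimension $d=1$ and exploit the explicit description of one-dimensional uniform quantization. Since $\rho_{\sigma}$ is carried by the segment $[-1,1]\times\{0\}^{d-1}$, every barycenter $b_i(Y)$ lies on it, hence so does $\delta_{B_N(Y)}$; projecting the transport problem onto this line, one obtains for \emph{every} $Z=(z_1,\dots,z_N)\in(\R^d)^N$ (in particular $Z=B_N(Y)$)
\[
   \Wass_2^2(\rho_\sigma,\delta_Z)\ \ge\ \frac1N\sum_{j=1}^N \mathrm{Var}(\bar\rho_j),
\]
where, writing $-1=t_0<t_1<\dots<t_N=1$ for the $\tfrac1N$-quantiles of $\rho_\sigma$, $I_j=[t_{j-1},t_j]$, $w_j=t_j-t_{j-1}$, we let $\bar\rho_j=N\rho_\sigma|_{I_j}$ be the normalised restriction. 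Indeed, in one dimension the $\Wass_2$-optimal plan from an absolutely continuous measure to a uniform atomic one is the monotone one, which ships the consecutive $\tfrac1N$-quantiles of $\rho_\sigma$ onto the ordered atoms, and each piece contributes $\int\|x-z_j\|^2\,d\rho_j\ge\frac1N\mathrm{Var}(\bar\rho_j)$ (coarser partitions, arising from repeated atoms, only increase the right-hand side by the total-variance formula). So it suffices to bound $\sum_j\mathrm{Var}(\bar\rho_j)$ from below for a well-chosen $\sigma_N$.

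Fix $\delta'=\tfrac{1+\delta}2\in(\delta,1)$ and set $\sigma_N=(2\delta'\ln N)^{-1/2}$, so that $e^{1/(2\sigma_N^2)}=N^{\delta'}$ and $\sigma_N\to0$. Two elementary facts drive the estimate. First, on $[-1,1]$ one has $\rho_{\sigma_N}(x)\ge\rho_{\sigma_N}(1)\ge(\sigma_N\sqrt{2\pi})^{-1}N^{-\delta'}$, whence every quantile cell satisfies $w_j\le(N\rho_{\sigma_N}(1))^{-1}\le\sqrt{2\pi}\,\sigma_N N^{\delta'-1}$, which is $\le\sigma_N^2$ once $N$ is large (a polynomial gain against a logarithm). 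Since $|(\ln\rho_{\sigma_N})'|\le1/\sigma_N^2$ on $[-1,1]$, this forces $\rho_{\sigma_N}$ to oscillate by a factor at most $e$ on each $I_j$; consequently $\mathrm{Var}(\bar\rho_j)\ge w_j^2/(12e)$ and, comparing cell by cell, $\int_{-1}^1\rho_{\sigma_N}^{-1}\le eN\sum_j w_j^2$, so that $\frac1N\sum_j\mathrm{Var}(\bar\rho_j)\ge\frac1{12e^2N^2}\int_{-1}^1\rho_{\sigma_N}^{-1}$. Second, a Laplace-type lower bound localised near the endpoints gives
\[
  \int_{-1}^1\frac{dx}{\rho_{\sigma_N}}=\Big(\int_{-1}^1 e^{-t^2/2\sigma_N^2}dt\Big)\Big(\int_{-1}^1 e^{x^2/2\sigma_N^2}dx\Big)\ \ge\ 2e^{-3/2}\,\sigma_N^3\,e^{1/(2\sigma_N^2)}=2e^{-3/2}\,\sigma_N^3\,N^{\delta'},
\]
bounding the first factor below by the integral over $[-\sigma_N,\sigma_N]$ and the second by the integral over $[1-\sigma_N^2,1]$.

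Combining the two displays, $\Wass_2^2(\delta_{B_N(Y)},\rho_{\sigma_N})\ge\frac{1}{6e^{7/2}}\sigma_N^3 N^{\delta'-2}=\frac{c_\delta}{(\ln N)^{3/2}}\,N^{-(2-\delta')}$ with $c_\delta>0$; since $\delta'>\delta$ the polynomial factor $N^{\delta'-\delta}$ dominates the logarithmic loss, so this is $\ge N^{-(2-\delta)}$ for all $N\ge N_0(\delta)$. For the finitely many $N<N_0$ the number $\inf_{Z\in(\R^d)^N}\Wass_2^2(\rho_{\sigma_N},\delta_Z)$ is strictly positive (a density cannot be exactly quantized by $N$ atoms), so replacing the constant by $C=\min\big(1,\min_{N<N_0}N^{2-\delta}\inf_Z\Wass_2^2(\rho_{\sigma_N},\delta_Z)\big)>0$ yields the claim for every $N$.

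The crux is the first block of estimates, i.e.\ the calibration of the decay of $\sigma_N$: it must be slow enough that $N$ atoms cannot resolve the truncated Gaussian near $x=\pm1$ (making $\int\rho_{\sigma_N}^{-1}$ as large as $\sim N^{\delta'}$), yet fast enough that the quantile cells stay much shorter than the scale $\sigma_N^2$ on which $\rho_{\sigma_N}$ varies, so the comparison $\mathrm{Var}(\bar\rho_j)\gtrsim w_j^2$ is available; the scaling $\sigma_N\sim(\ln N)^{-1/2}$ is exactly what reconciles the two, and the residual $(\ln N)^{-3/2}$ is harmless because we may pick any exponent $\delta'$ strictly between $\delta$ and $1$.
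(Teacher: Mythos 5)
Your proof is correct, and while it shares the paper's overall skeleton (reduce to the one-dimensional quantile cells $[F_{\sigma}^{-1}(i/N),F_{\sigma}^{-1}((i+1)/N)]$, then calibrate $\sigma_N$ via $\ee^{1/2\sigma_N^2}=N^{\text{power}}$ so that $\sigma_N\sim(\ln N)^{-1/2}$), the key lower bound is obtained by a genuinely different estimate. The paper keeps only the \emph{first} cell of the sum $\sum_i \rho_\sigma(-1)\,|P_i|^3$: it Taylor-expands $F_\sigma^{-1}$ near $0$ to show the boundary cell has length $\approx \frac{1}{Nm_\sigma}\ee^{1/2\sigma^2}$, which forces the constraint $2\alpha>1+\delta$ on the exponent $\alpha$ in $\ee^{1/2\sigma_N^2}=N^\alpha$. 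You instead bound \emph{every} cell's contribution from below by $\mathrm{Var}(\bar\rho_j)\gtrsim w_j^2$ (using that $\rho_{\sigma_N}$ oscillates by at most a factor $\ee$ on each cell, which is exactly where the calibration $w_j\leq\sigma_N^2$ enters) and then convert $\sum_j w_j^2$ into $\frac{1}{eN}\int\rho_{\sigma_N}^{-1}$, evaluated by a Laplace estimate. This global route buys three things: a better exponent relationship (any $\delta'>\delta$ suffices, versus $2\alpha>1+\delta$), no second-order remainder analysis of $F_g^{-1}$, and a statement valid for arbitrary atomic targets $\delta_Z$ on the line --- which lets you handle coinciding barycenters via the total-variance formula and, together with your treatment of small $N$, gives the uniform-in-$N$ constant that the proposition literally asserts (the paper's argument only covers $N$ large). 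All the individual steps check out: the monotone-plan reduction, the bounds $w_j\leq(N\rho_{\sigma_N}(1))^{-1}\leq\sigma_N^2$, $\min_{I_j}\rho\geq(eNw_j)^{-1}$, and the product lower bound for $\int_{-1}^1\rho_{\sigma_N}^{-1}$ localized at $[-\sigma_N,\sigma_N]$ and $[1-\sigma_N^2,1]$.
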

    From the proof, one can see that the dependence of $\sigma_N$ on $N$ is logarithmic.
	\begin{proof}

	We denote
	$g:x\in\R\mapsto\frac{1}{\sqrt{2\pi}}\ee^{-\frac{\abs{x}^2}{2}}$ the
	density of the centered Gaussian distribution and $F_g$ its cumulative
	distribution function, so that
	\begin{equation}
		\begin{split}
			\label{eq:msig}
			m_\sigma^{-1} = \int_{-1}^{1}\ee^{-\frac{\abs{x}^2}{2\sigma^2}}dx = \sigma \sqrt{2\pi} \int_{-1/\sigma}^{1/\sigma} g(y) dy = \sqrt{2\pi} \sigma (F_g(1/\sigma) -  F_g(-1/\sigma))
		\end{split}
	\end{equation}
	Note that, whenever $\sigma\to 0$, we have $\sigma m_\sigma\to \sqrt{2\pi}$. We denote by $F_\sigma:[-1,1]\to[0,1]$ the cumulative distribution
	function of $\rho_\sigma$.
	Given any point cloud $Y = (y_1,\hdots,\leq y_N)$ such that $y_1\leq
	\hdots\leq y_N$, the Power cells $P_i(Y)$ is simply the segment
	$$P_i(Y) = [F_\sigma^{-1}(i/N), F_\sigma^{-1}((i+1)/N)]. $$ Since these
	segments do not depend on $Y$, we will denote them $(P_i)_{1\leq
		i\leq N}$.  Finally, defining $b_i = N \int_{P_i} x \dd
	\rho_\sigma(x)$ as the barycenter of the $i$th power cell and
	$\delta_B = \frac{1}{N}\sum_i \delta_{b_i}$, we have
	\begin{equation}\label{W2below}
		\begin{aligned}
			\Wass_2^2(\delta_{B}, \rho_\sigma) &= \sum_{i=1}^N \int_{P_i} (x - b_i)^2 \dd\rho_\sigma(x)\\
			&\geq \rho_\sigma(-1) \sum_{i=1}^N \int_{P_i} (x - b_i)^2 \dd x \\
			&\geq C \rho_\sigma(-1) \sum_{i=1}^N (F_\sigma^{-1}((i+1)/N) - F_\sigma^{-1}(i/N))^3,
		\end{aligned}
	\end{equation}
	where we used that $\rho_\sigma$ attains its minimum at $\pm 1$ to get the first inequality.
	We now wish to provide an approximation for $F_\sigma^{-1}(t)$, $t\in[0,1]$. We first note,  using Taylor's formula, that we have
	$$ \begin{aligned}
		F_\sigma^{-1}(t) &=
		\sigma F_g^{-1}\left(F_g\left(\frac{-1}{\sigma}\right)+t\left[F_g\left(\frac 1\sigma\right) - F_g\left(\frac {-1}\sigma\right)\right]\right) \\
		&= \sigma F_g^{-1}\left(F_g\left(\frac{-1}{\sigma}\right)+ \frac{t}{\sqrt{2\pi} \sigma m_\sigma}\right)\\
		&= -1 + \sigma (F_g^{-1})'\left(F_g\left(\frac{-1}{\sigma}\right)\right) \frac{t}{\sqrt{2\pi} \sigma m_\sigma}
		+ \frac{\sigma}{2} (F_g^{-1})''(s) \frac{t^2}{2\pi \sigma^2 m_\sigma^2}
	\end{aligned}
	$$
	for some $s\in [F_g(-\frac 1\sigma), F_g(-\frac 1\sigma) + t (F_g(\frac 1\sigma) - F_g(-\frac 1\sigma))]$.
	But,
	$$(F_g^{-1})'(t)=\frac{1}{g\circ F_g^{-1}(t)}=\sqrt{2\pi}\ee^{\frac{\abs{F_g^{-1}(t)}^2}{2}},$$
	$$(F_g^{-1})''(t)=-\frac{g'\circ F_g^{-1}(t)}{\left(g\circ F_g^{-1}(t)\right)^3}=2\pi F_g^{-1}(t)\ee^{\abs{F_g^{-1}(t)}^2},$$
	and we see that 
	$$  \abs{F_\sigma^{-1}(t) - \left(-1+\frac{t}{ m_\sigma}\ee^{\frac{1}{2\sigma^2}}\right)} \leq \ee^{\frac{1}{\sigma^2}}\frac{t^2}{2\sigma^2 m_\sigma^2} $$

	Therefore, if we denote $\varepsilon(\sigma,t)$ the second-order error in the above formula, i.e.  $\varepsilon(\sigma,t)=\ee^{\frac{1}{\sigma^2}}\frac{t^2}{2\sigma^2 m_\sigma^2} $, the size of the first Power cell $P_0(Y)$ is of order:
	$$F_\sigma^{-1}(1/N) - F_\sigma^{-1}(0)=\frac{1}{N m_\sigma} \ee^{\frac{1}{2\sigma^2}}+O\left(\varepsilon\left(\sigma,\frac{1}{N}\right)\right).$$
	We will choose $\sigma_N$ depending on $N$ in order for the first term in the left-hand side to dominate the second one:
	\begin{equation}\label{a-bbb}
		\varepsilon\left(\sigma_N,\frac{1}{N}\right)=o\left(\frac{1}{N m_\sigma} \ee^{\frac{1}{2\sigma^2}}\right).
	\end{equation}
	In this way, we have
	\begin{equation}
		\label{eq:first}
		\begin{split}
			(F_\sigma^{-1}(1/N) - F_\sigma^{-1}(0))^3\rho_\sigma(-1)\geq &c\frac{1}{N^3 m_\sigma^3}\ee^{\frac{3}{2\sigma^2}}m_\sigma\ee^{-\frac{1}{2\sigma^2}}\\
			=&c\frac{1}{N^3m_\sigma^2}\ee^{\frac{1}{\sigma^2}}.
		\end{split}
	\end{equation}
	
	We now choose $\sigma=\sigma_{N}$ such that $\ee^{\frac{1}{2\sigma^2}}=N^\alpha$ for an exponent $\alpha$ to be chosen. We need $\alpha>0$ so that $\sigma_N\to 0$. This last condition and \eqref{eq:msig} implies that $m_{\sigma_{N}}$ is of order $\sqrt{\log N}$. This means that the condition \eqref{a-bbb} is satisfied if $\alpha<1$ and $N$ large enough.
	
	The sum in \eqref{W2below} is lower bounded by its first term, \eqref{eq:first}, and we get
	$$\Wass_2^2(\delta_{B}, \rho_\sigma) \geq c \frac{1}{N^3m_{\sigma_N}^2}\ee^{\frac{1}{\sigma_N^2}}\geq C\left(\frac{N^{2\alpha-3}}{\ln(N)}\right)$$
	for some constant $C>0$, since $\sigma$ depends logarithmically on $N$. Finally, if we want this last expression to be larger than $N^{-(2-\delta)}$ we can take for instance $2\alpha>1+\delta$ and $N$ large enough.
	\end{proof}
	
	The following corollary, whose proof can just be obtained by adapting the above proof to a simple multi-dimensional setting where measures and cells ``factorize'' according to the components, confirms the facts observed in the numerical section (Section~\ref{sec:num}), and the sharpness of our result (Remark~\ref{rmk:optimbeta}). 
        \begin{cor}\label{cor:gaussian}
	  Fix  $\delta\in(0,1)$. Given any $n\in\N$, consider an axis-aligned discrete grid of the form $Z_N = Y_1 \times \hdots \times Y_d$ in $\Rsp^d$, with $N = \Card(Z_N) = n^d$, where each $Y_j$ is a subset of $\Rsp$ with cardinal $n$. Finally, define $\sigma_N := \sigma_{n,\delta}$ as in \cref{prop:gaussian} Then we have
	  $$\Wass_2^2(\delta_{B_N(Z_N)},\rho_{\sigma_N}\otimes\dots\otimes\rho_{\sigma_N})\geq CN^{-\frac {(2-\delta)}{d}},$$
	  where the constant $C$ is independent of $N$.
	\end{cor}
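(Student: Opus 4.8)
The plan is to exploit the separable structure of the problem: since both $\rho:=\rho_{\sigma_N}\otimes\cdots\otimes\rho_{\sigma_N}$ and the grid $Z_N=Y_1\times\cdots\times Y_d$ factorize along the coordinate axes, the semidiscrete optimal transport problem factorizes as well, and the $d$-dimensional lower bound reduces to $d$ copies of the one-dimensional estimate of \cref{prop:gaussian}.

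First I would check that the Power cells of $\delta_{Z_N}$ against $\rho$ are Cartesian products of one-dimensional Power cells, and that the barycenter map factorizes accordingly. For each coordinate $j$, let $\psi^{(j)}\in\R^n$ be the optimal Kantorovich potential for the one-dimensional problem between $\rho_{\sigma_N}$ and $\delta_{Y_j}$, so that each cell $Q^{(j)}_i:=\Pow{i}{Y_j}{\psi^{(j)}}$ carries $\rho_{\sigma_N}$-mass $1/n$. For a multi-index $\mathbf i=(i_1,\dots,i_d)$, set $\phi_{\mathbf i}:=\psi^{(1)}_{i_1}+\cdots+\psi^{(d)}_{i_d}$. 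Expanding the squared distances in the definition of $\Pow{\mathbf i}{Z_N}{\phi}$ and subtracting $\|x\|^2$, the defining inequalities split into one inequality per coordinate, so that $\Pow{\mathbf i}{Z_N}{\phi}=Q^{(1)}_{i_1}\times\cdots\times Q^{(d)}_{i_d}$ up to a $\rho$-negligible set (coordinate ties being $\rho$-null), a cell of $\rho$-mass $\prod_j(1/n)=1/N$. By the optimality characterization \eqref{eq:otdual} these are precisely the optimal Power cells $P_{\mathbf i}(Z_N)$. Computing $b_{\mathbf i}(Z_N)=N\int_{P_{\mathbf i}(Z_N)}x\dd\rho(x)$ coordinate by coordinate, the $N$ prefactor cancels against the product of the $d-1$ transverse masses $1/n$ and the factor $1/n$ from the one-dimensional barycenter formula, giving $b_{\mathbf i}(Z_N)=(b^{(1)}_{i_1},\dots,b^{(d)}_{i_d})$, where $b^{(j)}_i$ is the one-dimensional barycenter of $Q^{(j)}_i$. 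Hence $\delta_{B_N(Z_N)}=\delta_{B_n(Y_1)}\otimes\cdots\otimes\delta_{B_n(Y_d)}$, a product of $d$ uniform measures, each supported on $n$ distinct points.

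Next I would invoke the (easy) tensorization lower bound for $\Wass_2^2$: given any coupling $\pi$ of $\mu_1\otimes\cdots\otimes\mu_d$ and $\nu_1\otimes\cdots\otimes\nu_d$, the pushforward of $\pi$ under $(x,y)\mapsto(x_j,y_j)$ is a coupling of $\mu_j$ and $\nu_j$, and $\int\|x-y\|^2\dd\pi=\sum_j\int|x_j-y_j|^2\dd\pi\geq\sum_j\Wass_2^2(\mu_j,\nu_j)$; taking the infimum over $\pi$ gives $\Wass_2^2(\bigotimes_j\mu_j,\bigotimes_j\nu_j)\geq\sum_{j}\Wass_2^2(\mu_j,\nu_j)$. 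Applying this with $\mu_j=\rho_{\sigma_N}$ and $\nu_j=\delta_{B_n(Y_j)}$, one gets $\Wass_2^2(\delta_{B_N(Z_N)},\rho)\geq\sum_{j=1}^d\Wass_2^2(\delta_{B_n(Y_j)},\rho_{\sigma_N})$. Since $\sigma_N=\sigma_{n,\delta}$ is exactly the variance produced by \cref{prop:gaussian} for a point cloud of cardinality $n$, each summand is at least $Cn^{-(2-\delta)}$; therefore $\Wass_2^2(\delta_{B_N(Z_N)},\rho)\geq dC\,n^{-(2-\delta)}=dC\,N^{-(2-\delta)/d}$, which is the asserted bound.

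I do not expect any serious difficulty. The only step requiring genuine care is the factorization of the Power cells: one must argue that the potential can be chosen additively across coordinates and that the resulting product cells are the optimal ones, which relies on the uniqueness built into \eqref{eq:dual}–\eqref{eq:otdual} together with the fact that coordinate ties are $\rho$-negligible. A minor point of bookkeeping is the indexing: \cref{prop:gaussian} must be applied with "number of points" equal to $n=N^{1/d}$, which is consistent with the statement's definition $\sigma_N:=\sigma_{n,\delta}$.
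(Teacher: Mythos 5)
Your proof is correct and follows exactly the route the paper sketches (it only remarks that "measures and cells factorize according to the components"): additive Kantorovich potentials give the product structure of the Power cells and barycenters, and the standard tensorization lower bound $\Wass_2^2(\bigotimes_j\mu_j,\bigotimes_j\nu_j)\geq\sum_j\Wass_2^2(\mu_j,\nu_j)$ reduces everything to $d$ copies of Proposition~\ref{prop:gaussian} with $n=N^{1/d}$ points. The only detail worth spelling out is the reverse inclusion $\Pow{\mathbf i}{Z_N}{\phi}\subseteq Q^{(1)}_{i_1}\times\cdots\times Q^{(d)}_{i_d}$, which follows by testing the defining inequality against multi-indices $\mathbf j$ differing from $\mathbf i$ in a single coordinate.
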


\end{document}